\newtheorem{theorem}{Theorem}[section]
\newtheorem{lemma}[theorem]{Lemma}
\newtheorem{proposition}[theorem]{Proposition}
\newtheorem{corollary}[theorem]{Corollary}
\theoremstyle{definition}
\newtheorem{definition}[theorem]{Definition}
\theoremstyle{remark}
\newtheorem{rmk}[theorem]{Remark}
\newtheorem*{claim}{Claim}
\newtheorem{assn}[theorem]{Assumption}
\newtheorem*{ack}{Acknowledgments}
\newtheorem{theoremalpha}{Theorem}
\newtheorem{definitionalpha}[theoremalpha]{Definition}
\def\R{{\mathbb R}}
\def\Q{{\mathbb Q}}
\def\OO{{\mathcal O}}
\DeclareMathOperator{\bdiv}{\b{div}}
\newcommand{\HH}[3]{H^{{#1}} \big( {#2} , {#3}
\big) }
\newcommand{\andd}{ \ \ \text{ and } \ }
\newcommand{\deq}{\ensuremath{ \stackrel{\textrm{def}}{=}}}
\newcommand{\equ}{\ensuremath{ \,=\, }}
\newcommand{\mif}{\ensuremath{\textrm{\ if\ }}}
\renewcommand{\b}[1]{\ensuremath{\mathbf{#1}}}
\newcommand{\st}[1]{\ensuremath{ \left\{ #1 \right\} }}
\newcommand{\spn}[1]{\ensuremath{ \langle #1 \rangle }}
\DeclareMathOperator{\Fix}{Fix}
\DeclareMathOperator{\Supp}{Supp}
\DeclareMathOperator{\WDiv}{WDiv}
\DeclareMathOperator{\Zero}{Zero}
\DeclareMathOperator{\Weil}{Weil}
\begin{document}

\title{Zariski decomposition of b-divisors}

\author{Alex K\"uronya}
\address{Universit\"at Duisburg-Essen, Campus Essen, FB 6 Mathematik, D-45117 Essen, Germany}

\address{Budapest University of Technology and Economics, Budapest P.O. Box 91, H-1521 Hungary}
\email{{\tt alex.kueronya@uni-due.de}}

\author{Catriona Maclean}
\address{Universit\'e Joseph Fourier, UFR de Math\'ematiques, 100 rue des Maths, BP 74, 38402 St Martin d'H\'eres, France}
\email{\tt Catriona.Maclean@ujf-grenoble.fr}

\thanks{During this project the first author was partially supported by  the  DFG-Leibniz program, the SFB/TR 45 ``Periods, moduli spaces and arithmetic of algebraic varieties'', and the OTKA Grant 61116 by the Hungarian Academy of Sciences. The second author obtained partial support from the BUDALGGEO Marie Curie Host Fellowship for the Transfer of Knowledge and the ANR project 3AGC.}

\maketitle

\begin{abstract}
Based on a recent work of Thomas Bauer's \cite{Bauer} reproving the existence
of Zariski decompositions for surfaces, we construct a  b-divisorial
analogue of Zariski decomposition  in all dimensions.
\end{abstract}

\section{Introduction}

The purpose of this paper is to present a generalization of Zariski 
decomposition on surfaces to the context of b-divisors. In particular, 
we provide such a  decomposition for an arbitrary effective
 $\Q$-b-divisor on a normal $\Q$-factorial projective variety in the sense of 
b-divisors. 

Originating in the seminal work of Zariski \cite{Zariski} on the structure 
of linear systems of surfaces, the Zariski decomposition  $D=P_D+N_D$ of an effective $\Q$-divisor $D$ on a 
smooth projective surface  $X$ over an  algebraically closed field  consists of a nef divisor $P_D$ and a negative 
cycle $N_D$ satisfying an orthogonality condition with respect to the 
intersection form on $X$. More specifically, given any effective 
$\Q$-divisor $D$, Zariski proves that there is a unique decomposition of $D$ 
\[
D=P_D+N_D
\] 
such that  $P_D$ is nef and $N_D$ is effective;  $P\cdot C=0$, for any curve $C$ appearing in $\Supp (N)$; and if  
$\Supp (N)= C_1\cup\dots \cup C_n$ then  the intersection matrix $I(C_1,\ldots C_n)$ is negative definite.
Zariski decomposition has  the following useful properties.
\begin{enumerate}
\item For any integer $k$, one $\HH{0}{X}{\lfloor
kP\rfloor} = \HH{0}{X}{\lfloor kD\rfloor}$ (ie. 
$P_D$ "carries all sections of $D$").
\item If the effective nef divisor $P'$  satisfies $P'\leq D$ then
$P'\leq P_D$\ ,
\end{enumerate} 
Thus providing a strong tool to understand linear series on  surfaces. It has been playing a distinguished role in the theory ever since, among others it  is very useful for studying section rings 
\[
R(X,D) \deq \oplus_{m\geq 0}\HH{0}{X}{\lfloor mD\rfloor} \ .
\]
Since $R(D)=R(P_D)$, Zariski decomposition  allows us to 
reduce questions concerning $R(D)$ --- most notably whether it is finitely generated ---
to the case where $D$ is nef. As an illustration Zariski's paper  contains an appendix by Mumford which uses several of Zariski's  results in \cite{Zariski} to prove that the canonical ring of a surface of general type is finitely generated.

There is no immediate way to extend this definition to  higher-dimensional varieties. 
Apart from an earlier attempt by Benveniste (\cite{Ben1}, \cite{Ben2}) 
all proposed higher-dimensional generalizations have been based on those 
properties of the Zariski decomposition which make it useful for studying 
section rings, namely a) $P_D$ is nef and b) $\HH{0}{X}{\lfloor kP_D\rfloor} =
\HH{0}{X}{\lfloor kD\rfloor}$ for all $k$.

Given an effective $\Q$-divisor $D$ on a  variety $X$ it is easy to see that no $P_D\leq D$ can satisfy both properties if $D$  is non-nef but some multiple $kD$ of $D$ has no divisorial fixed locus. 
To get around this problem, we allow blow-ups. In \cite{Kawamata}, (see also \cite{moriwaki}) Kawamata defines a Zariski-type decomposition as follows.

\begin{definitionalpha}
Let $D$ be a big divisor on a normal variety $X$. A rational (resp. real) 
Zariski decomposition of $D$ in the sense of Cutkosky--Kawamata--Moriwaki  is a 
proper birational map $\mu:\widetilde{X}\to X$, and an  effective $\Q$ (resp. $\R$)
divisor $N_D\leq \mu^*D$ such that 
\begin{enumerate}
\item $P_D=\mu^*D-N_D$ is nef,   
\item  $\HH{0}{X}{\lfloor \mu^*(kD)\rfloor }= \HH{0}{X}{\lfloor kP_D \rfloor}$
for all $k\geq 1$.
\end{enumerate}
\end{definitionalpha}
The requirement that  the divisor $D$ be big forces the  
Zariski decomposition, if it exists, to be  unique up to birational modification.
Indeed, we then have 
\[
N_D=\lim_{n\rightarrow \infty} \frac{{\rm Fix}(mD)}{m}\ ,
\] 
by a result of Wilson's  (see \cite[Theorem 2.3.9]{PAG} or \cite{Wilson}).
The inclusion of real Zariski decompositions, hitherto believed to be 
uninteresting since real divisors very rarely have finitely generated section 
rings, is motivated by a counterexample of Cutkosky's \cite{Cutkosky} showing that 
certain divisors only have real Zariski decompositions.

As the main result of \cite{Kawamata}, Kawamata proves that if $(X,\Delta)$ is a normal klt pair such that $K_X+\Delta$ is big and possesses a {\it real } Zariski decomposition then its log canonical  ring is finitely generated. 

A subsequent counterexample of Nakayama's  \cite{Nakayama} showed that in general even real Zariski decompositions  do not exist on higher-dimensional varieties.

The conditions a) and b) do not define a decomposition 
$D=P_D+N_D$ uniquely if $D$ is not big, even on surfaces. 
For example, let $E$ be an elliptic 
curve and set $X={\rm Proj}(\mathcal{O}_E\oplus L)$, where $L$ is any
degree-zero  non-torsion line bundle on $E$. If we take $D={\rm Proj} (L) \subset
X$ then $D$ is nef but $H^0(kD)=\mathbb{C}$ for any $k$, so for any rational
$0\leq \lambda\leq 1$ the decomposition $P_D=\lambda D$ and 
$N_D=(1-\lambda) D$ satisfies conditions a) and b). Fujita gets around this 
problem by using the maximality  of $P_D$ amongst nef sub-divisors of $D$. In \cite{fujita} he gives 
the following definition.
\begin{definitionalpha}
Let $D$ be an effective $\Q$-divisor on a normal variety $X$. 
A rational (resp. real) 
Fujita Zariski decomposition of $D$ is a 
proper birational map $\mu:\widetilde{X}\to X$, and an 
effective $\Q$ (resp. $\R$)
divisor $N_D\leq \mu^*D$ such that 
\begin{enumerate}
\item $P_D=\mu^*D-N_D$ is nef,   
\item For any proper birational map $\nu: X''\rightarrow X'$ and nef effective 
divisor $P''\leq \nu^*\mu^*(D)$ we have that $P''\leq \nu^*(P_D)$. 
\end{enumerate}
\end{definitionalpha}

A Zariski decomposition in the sense of Fujita is automatically a  Zariski 
decomposition in the sense of Cutkosky--Kawamata--Moriwaki.

The advent of multiplier ideals brought a certain analytic version of this 
concept. Precisely, Tsuji defines in \cite{Tsuji1}
\begin{definitionalpha} 
Let $L$ be a line bundle on $X$, a variety. An analytic Zariski decomposition
of $L$ is a singular metric $h$ on $L$, semipositive in the sense of currents, 
such that
for all $k$, $\HH{0}{X}{L^k}=\HH{0}{X}{L^k\otimes \mathcal{I}(h^{\otimes k})}$.
\end{definitionalpha}

The motivation for this definition is as follows. Suppose that there were
a Zariski decomposition $\mu^*(D)=P_D+N_D$ for $D$ on some birational 
modification $\mu: X'\rightarrow X$. If the line bundle $\mathcal{O}(P_D)$ 
were not only nef, 
but actually semi-positive (a slightly stronger condition, which implies 
nef and is implied by ample), then we can put a semi-positive smooth metric on
$\mathcal{O}(P_D)$. 
This descends to a singular semi-positive metric on $L=\mathcal{O}(D)$, 
and by definition of the analytic multiplier ideal, a section 
$\sigma\in \HH{0}{X}{mD}$ is contained in $\HH{0}{X}{mD\otimes\mathcal{I}(h^{\otimes m})}$
if and only if $\mu^*(\sigma)$ is contained in  $\HH{0}{X}{mP_D}$.
Note that the analytic Zariski decomposition is not unique.
More importantly, it is considerably weaker than its algebraic counterpart. The fact that $K_X$ has an analytic decomposition does not imply that the canonical ring is finitely generated. 

In \cite{Demailly}, Demailly, Peternell and Schneider 
prove the following theorem: given a pseudo effective line bundle $L$ on a complex variety 
$X$, $L$ admits an analytic Zariski decomposition. Up to equivalence of singularities, the set of
analytic Zariski  decompositions admits a unique minimally singular member.  Our work here  can be seen as an algebraic version 
of this result. In \cite{bfj}, Boucksom, Favre and Johnsson consider a construction called the positive intersection product of
a set of $b$-divisors: in the case  where the set contains only one element, this is a 
Zariski-type decomposition. Their definition is the same as ours for big divisors.

Shokurov's paper \cite{shokurov} and the survey article by Prokurhov 
accompanying it \cite{prokhorov} contain many interesting Zariski-type
decompositions,
some of which work for $b$-divisors. In particular, the decomposition
$\b{D}= \b{D}^m+\b{D}^e$ defined in example $4.30$ of Shokurov's paper
gives us a Zariski-type 
decomposition for $b$-divisors. Although the definition is different,
we show below that the our definition gives the same result as
Shokurov's in the case where the divisor is big. However, they differ
for non-big $b$-divisors.  

The recent paper \cite{bfj} by Boucksom, Favre and Jonsson also
includes a Zariski-type decomposition of $b$-divsors. As discussed
in section 3.4 of their paper, the case $n=1$ of their
positive intersection products give a Zariski-type decomposition of
$b$-divisors. Their definition is the same as ours in the case of
big $b$-divisors.

In his original proof, Zariski concentrated on constructing the negative 
part $N_D$ using cunning linear algebra, which made for a reasonably 
complicated proof. In a recent work Bauer \cite{Bauer} gave a 
conceptual and  very elegant construction of Zariski decompositions on 
surfaces  using the 
characterization of the nef part $P_D$ as the maximal nef subdivisor of $D$. 

It is this latter approach that we use to extend the notion of Zariski 
decomposition to b-divisors.  We retain most of the characteristics of the 
higher-dimensional case with one notable exception: the positive part of a 
b-divisor is 
only a limit of b-nef b-divisors in a suitable sense. Our main result, proven as Theorem \ref{thm:main}, 
is as follows.

\begin{theoremalpha}\label{thma:main}
Let $X$ be a $\Q$-factorial normal projective variety over an 
algebraically closed field of characteristic 0, $\b{D}$ an effective $\Q$-b-divisor 
on $X$. 
Then there exists a unique decomposition
\[
\b{D} = \b{P_D} + \b{N_D}\ ,
\]
where $\b{P_D}$,$\b{N_D}$ are  effective b-$\mathbb{R}$ divisors on $X$, 
such that
\begin{enumerate}
\item $\HH{0}{X}{\lfloor k\b{P_D}\rfloor } = \HH{0}{X}{\lfloor k\b{D}\rfloor }$
\item $\b{P_D}$ is a limit of b-nef b-divisors on every 
proper birational model $Y\to X$.
\item For any nef b-divisor $\b{P'}\leq
\b{D}$ we have that $\b{P'}\leq \b{P_D}$.
\end{enumerate}
\end{theoremalpha}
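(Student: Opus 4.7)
Following Bauer's approach on surfaces, my plan is to construct $\b{P_D}$ as the coefficient-wise upper envelope of all b-nef b-subdivisors of $\b{D}$. Define
\[
\mathcal{S} \;:=\; \st{ \b{P} \text{ b-nef b-divisor} : \b{0} \leq \b{P} \leq \b{D} }\,.
\]
This set is non-empty (it contains $\b{0}$), and for every prime b-divisor $\b{E}$ over $X$ the coefficients $\mathrm{coeff}_{\b{E}}(\b{P})$, $\b{P} \in \mathcal{S}$, are bounded above by $\mathrm{coeff}_{\b{E}}(\b{D}) < \infty$. I declare
\[
\mathrm{coeff}_{\b{E}}(\b{P_D}) \;:=\; \sup_{\b{P} \in \mathcal{S}} \mathrm{coeff}_{\b{E}}(\b{P})
\]
for every prime b-divisor $\b{E}$, and set $\b{N_D} := \b{D} - \b{P_D}$. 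A routine compatibility check using the b-divisor structure of each $\b{P} \in \mathcal{S}$ shows that this prescription defines an effective b-$\R$-divisor, and both $\b{P_D}$ and $\b{N_D}$ are effective by construction. Property (3) is then built in, and uniqueness will follow from (3) together with the approximation statement (2).

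Property (2) is the crux of the argument. On a fixed birational model $Y \to X$, the support of $\b{D}_Y$ is finite, so it suffices, for each $\varepsilon > 0$, to exhibit a single $\b{P}^{(\varepsilon)} \in \mathcal{S}$ whose trace on $Y$ is within $\varepsilon$ of $\b{P_D}_Y$ componentwise. Bauer's surface proof exploits the key fact that on a surface the componentwise maximum of two nef divisors which are both $\leq D$ is again nef; in higher dimension this is badly false, and this is the central obstacle. The substitute I would pursue is to pass to sufficiently refined models $Z$ dominating $Y$ on which several candidate b-nef divisors, each optimizing the coefficient along a distinguished prime of $Y$, can be combined by an averaging or gluing construction that remains nef after a further blow-up, exploiting $\Q$-factoriality, normality, and the freedom to blow up arbitrarily high. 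This approximation then also feeds back into the uniqueness argument: any candidate $\b{P}'$ satisfying (2) and (3) sandwiches $\b{P_D}$ between limits of elements of $\mathcal{S}$ from below and the supremum from above, forcing $\b{P}' = \b{P_D}$.

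Property (1) decomposes into two inclusions. The containment $\HH{0}{X}{\lfloor k\b{P_D}\rfloor} \subseteq \HH{0}{X}{\lfloor k\b{D}\rfloor}$ is immediate from $\b{P_D} \leq \b{D}$. For the reverse, given $\sigma \in \HH{0}{X}{\lfloor k\b{D}\rfloor}$, pass to a log-resolution $Y \to X$ on which the mobile part $M$ of $|\lfloor k\b{D}\rfloor|$ becomes base-point-free. Then $\frac{1}{k}M$ extends to an effective b-nef b-divisor dominated by $\b{D}$, hence belongs to $\mathcal{S}$, and therefore is $\leq \b{P_D}$ by the maximality established above. A standard mobile-versus-fixed-part argument in the spirit of Wilson's asymptotic description of fixed parts then realises $\sigma$ as a section of $\lfloor k\b{P_D}\rfloor$, completing the proof.
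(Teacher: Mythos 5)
Your overall architecture matches the paper's: define $\b{P_D}$ as the coefficient-wise supremum over all b-nef b-subdivisors of $\b{D}$, observe that (3) holds by construction, and obtain (1) by showing that the b-divisor $\b{M}_k(\b{D}) := \b{D} - \tfrac{1}{k}\Fix(k\b{D})$ becomes nef after passing to a resolution on which the mobile part of $\lvert\lfloor k\b{D}\rfloor\rvert$ is base-point-free, so that $\b{M}_k(\b{D})\leq\b{P_D}$ and all sections of $\lfloor k\b{D}\rfloor$ are carried by $\lfloor k\b{P_D}\rfloor$. These parts of your plan are sound and in the same spirit as the paper's Lemma 3.4 and the opening of the proof of Theorem 3.1.

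The gap is in Property (2), which you correctly identify as the crux. You observe that the coefficient-wise maximum of two nef divisors need not be nef in higher dimension, and then propose to get around this via ``an averaging or gluing construction that remains nef after a further blow-up.'' This is a placeholder, not an argument: averaging (a convex combination) produces a nef divisor but does not dominate either summand, so it cannot push the coefficients up toward the supremum; ``gluing'' has no evident meaning for divisors. What the paper actually proves --- and what constitutes its main technical contribution --- is that the coefficient-wise maximum of two nef $\Q$-subdivisors $D_1,D_2\leq D$ \emph{does} become nef after a specific proper birational modification, and moreover that this maximum commutes with pulling back to higher models (Theorem 2.2, reduced to Theorem 2.4). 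The key is Proposition 2.3's criterion: $\max(D_1,D_2)$ is nef provided every prime divisor on which $D_1$ dominates is disjoint from every prime divisor on which $D_2$ dominates. The required separation is then achieved by iterated weighted blow-ups $\mathrm{Bl}_{I_i^a+I_j^b}X$ (Section 4), with $(a,b)$ chosen so that the unique exceptional divisor acquires equal coefficients in $\pi^*D_1$ and $\pi^*D_2$, followed by normalization; one checks the relevant hypotheses are preserved and the number of ``bad pairs'' strictly decreases. Once this lemma is available, your approximation scheme for (2) --- choose, for each prime $Q_i$ in $(\b{P_D})_{X'}$, a nef $\b{N}_i\leq\b{D}$ nearly attaining the supremum coefficient along $Q_i$, and pass to their maximum --- works exactly as you say. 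Without the separating-blow-up lemma, however, the maximum you need is not known to be nef and the proof does not close.
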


A  few words about the organization of this paper. First we fix notation, and review our strategy in Section 2. In Section 3 we construct the b-divisorial Zariski decomposition, and prove its properties modulo results proved later in the article. 
Section 4 is devoted to the construction  and properties of separating blow-ups, the  main technical tool of the paper.

\begin{ack}
Helpful discussion with Eckart Viehweg were appreciated, as were
comments by Sebastien Boucksom.  
\end{ack}

\section{Strategy and overview.}

All varieties are normal  projective varieties, unless otherwise mentioned. 
An integral b-divisor $\b{D}$  on $X$  is an element of the group
\[
\b{Div}(X) \deq  \lim_{\leftarrow} \WDiv (Y)\ , 
\]
with  the limit  taken over all proper birational models $f : Y \rightarrow X$ along  with the induced
homomorphisms $f_* : \WDiv(Y )\to \WDiv(X)$. To put it differently,  $\b{D}$ is a collection of
divisors $D_Y \in \WDiv(Y )$ compatible with push-forwards. For every $Y\to X$, $D_Y$ is called the trace of 
$\b{D}$ on $Y$, and is denoted $\b{D}_Y$. One obtains rational and real b-divisors by tensoring with $\Q$ and $\R$,  respectively. 

As usual, the b-divisor of a nonzero rational function $\phi\in k(X)$ is defined as 
\[
 \b{\bdiv}_X (\phi) \deq \sum_{\nu}{\nu_E(\phi)E}
\]
where $E$ runs through all geometric valuations with center  on $X$. Two
b-divisors are considered linearly equivalent if they differ by the b-divisor of a
nonzero rational function.

One defines the associated b-divisorial sheaf $\OO_X(\b{D})$  by
\[
\Gamma (U,\OO_X(\b{D}))  \deq  \st{\phi\in k(X)\, |\, (\b{\bdiv}_X \phi + \b{D})|U \geq  0} \ .
\]
Note that the sheaf $\OO_X(\b{D})$ is \emph{not} coherent, however  its space of global sections is  finite-dimensional due to the inclusion  $\OO_X(\b{D})\hookrightarrow \OO_X(\b{D}_X)$.
The Cartier closure of an $\R$-Cartier divisor $D$  on $X$  is the b-divisor $\overline{D}$  with trace $(\overline{D})_Y \deq  f^*D$ on every model $f:Y\to X$.
For more on the language of b-divisors the reader might wish to consult  the appropriate chapter of \cite{Corti}.

In constructing Zariski decompositions for b-divisors, we will follow the approach of Bauer \cite{Bauer}. To this end, 
we start by reviewing his proof for the surface case. Given an effective divisor $D$ on a 
surface $X$, Bauer sets:
\[
P_D \deq \max \{ P'\leq D, P {\rm\ nef}\}\ .
\]
By this maximum, we mean that the coefficient of a prime divisor $E$ in 
$P_D$ is the maximum of $c_{E}(P')$ of the coefficients of $E$ in nef 
subdivisors of $D$.

Assume for the moment that $P_D$ is itself nef. 
Set $N_D= D-P_D$, which is effective by construction. If $C\in \Supp (N)$ 
and $P_D\cdot C>0$ then for any small positive $\epsilon$, $P_D+\epsilon C$ is 
still a nef subdivisor of $D$, 
contradicting the maximality of $P_D$. If $I(C_1\ldots C_n)$ is not
negative definite then we can find an effective divisor $C'$ 
supported on ${\rm Supp}(N)$ such that $C'\cdot C_i\geq 0$ for all $i$. 
For small positive $\epsilon$, $P_D+\epsilon C'$ is then a nef subdivisor of 
$D$, contradicting the maximality of $P_D$.

The important point is therefore the nefness of $P_D$, which follows from the 
following lemma.
\begin{lemma}\label{lem:1}
Let X be a surface and let $D_1$ $D_2$ be two nef effective divisors on $X$.
Then ${\rm max}(D_1, D_2)$ is nef. 
 \end{lemma}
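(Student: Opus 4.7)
The plan is to write $M = \max(D_1, D_2)$ as $D_1 + E_1 = D_2 + E_2$, where $E_i \deq M - D_i$ are automatically effective by the definition of the coefficient-wise maximum. The first key observation is that $E_1$ and $E_2$ have disjoint supports: if some prime divisor $C$ appeared in both, then the coefficient of $C$ in $M$ would have to strictly exceed both $c_C(D_1)$ and $c_C(D_2)$, contradicting $c_C(M) = \max(c_C(D_1), c_C(D_2))$.

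To verify nefness, I would check $M \cdot C \geq 0$ for every irreducible curve $C$ on $X$. If $C$ is not contained in $\Supp(M)$, then $M \cdot C \geq 0$ simply because $M$ is effective. Otherwise $c_C(M) > 0$, and by symmetry I may assume $c_C(M) = c_C(D_1)$, so that $c_C(E_1) = 0$, i.e.\ $C \not\subset \Supp(E_1)$. Writing
\[
M \cdot C \equ (D_1 + E_1)\cdot C \equ D_1 \cdot C + E_1 \cdot C,
\]
the first summand is nonnegative because $D_1$ is nef, and the second is nonnegative because $E_1$ is an effective divisor not containing $C$ as a component, so its intersection with $C$ reduces to a sum of nonnegative local intersection multiplicities.

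The mild subtlety is really only the choice in the tie-breaking step, namely making sure that for any $C \subset \Supp(M)$ one can select an index $i$ with $c_C(E_i) = 0$ so that $C \not\subset \Supp(E_i)$; but this is automatic from the max definition. I do not expect any serious obstacle here, since the result is purely a consequence of the disjointness of $\Supp(E_1)$ and $\Supp(E_2)$ on a surface, where effectivity plus no shared components already forces nonnegative intersection with every curve. The argument is essentially two lines once the decomposition is set up.
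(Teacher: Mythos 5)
Your proposal is correct and follows essentially the same line of reasoning as the paper's proof: for any irreducible curve $C$ you pick the index $i$ (WLOG $i=1$) with $c_C(M) = c_C(D_i)$, write $M = D_i + E_i$ with $C \not\subset \Supp(E_i)$, and conclude $M\cdot C = D_i\cdot C + E_i\cdot C \geq 0$. Your preliminary observation that $\Supp(E_1)$ and $\Supp(E_2)$ are disjoint is a cleaner way of packaging the same tie-breaking step the paper performs curve by curve.
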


\begin{proof}
Let $C$ be an irreducible curve on $X$. We write $D_1= a_1C+ D'_1$ and $D_2=
a_2C+D'_2$. We may assume that $a_1\geq a_2$ so that ${\rm max}(D_1, D_2)=
D_1+M$, where the coefficient of $C$ in $M_1$ is $0$. Hence 
\[ 
{\rm max}(D_1, D_2)\cdot C=D_1\cdot C+M\cdot C\geq D_1\cdot C\geq 0\ .
\]
This completes the proof of the lemma.
\end{proof}

Obviously, this fails in higher dimensions, depending as it does on the fact 
that there is at most one prime divisor on a surface intersecting a given 
irreducible curve negatively. Our aim will be to show that on a suitable 
birational modification however, the statement of Lemma~\ref{lem:1} remains true for $\mathbb{Q}$-divisors. 
This will enable us to construct Zariski decompositions for b-divisors in arbitrary dimensions.

\begin{theorem}\label{thm:th1}
Let $X$ be a normal $\Q$-factorial variety, $D$ be an effective divisor on $X$,  $D_1$ and $D_2$  nef effective $\Q$-subdivisors of $D$. There exists a birational morphism $F: Y\rightarrow X$ such that 
\[
 \max (F^*(D_1), F^*(D_2))
\]
is nef. Moreover, we have that for any higher model $G: Z\rightarrow Y$, 
\[
G^*(\max (F^*(D_1), F^*(D_2))) \equ  \max(G^*F^*(D_1), G^*F^*(D_2))\ .
\]
\end{theorem}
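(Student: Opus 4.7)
The strategy is to exploit the elementary decomposition
\[
\max(D_1,D_2) \;=\; D_1 + B \;=\; D_2 + A,
\]
where $A \deq (D_1-D_2)_+$ and $B \deq (D_2-D_1)_+$ are effective $\Q$-divisors on $X$ with no common prime components. My key reduction would be: it suffices to produce a proper birational morphism $F:Y\to X$ such that the supports of $F^*A$ and $F^*B$ are \emph{set-theoretically disjoint} in $Y$. Constructing such a ``separating'' model is the main technical content of the article, and I expect it to be the principal obstacle; it is carried out in Section~4, and presumably proceeds by iterating blow-ups along the scheme-theoretic intersection of $\Supp(A)$ and $\Supp(B)$ until the strict transforms no longer meet and the exceptional contributions can be absorbed cleanly (this is presumably where normality, $\Q$-factoriality, and possibly characteristic zero enter).

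Granting the existence of $F$, nefness of $\max(F^*D_1,F^*D_2)$ follows quickly. Since $F^*A$ and $F^*B$ have disjoint supports on $Y$, a prime-by-prime comparison of coefficients shows
\[
\max(F^*D_1,F^*D_2) \;=\; F^*D_2 + F^*A \;=\; F^*\max(D_1,D_2).
\]
For any irreducible curve $C \subset Y$, disjointness forces $C$ to miss at least one of $\Supp(F^*A)$ and $\Supp(F^*B)$. If $C\cap\Supp(F^*A)=\emptyset$, then $F^*A\cdot C = 0$; since $F^*D_2$ is the pullback of a nef divisor and hence nef, I conclude
\[
\max(F^*D_1,F^*D_2)\cdot C \;=\; F^*D_2\cdot C + F^*A\cdot C \;\geq\; 0,
\]
and the other case is identical by symmetry.

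For the stability clause, given any higher model $G:Z\to Y$, the inclusions $\Supp(G^*F^*A)\subseteq G^{-1}(\Supp F^*A)$ and $\Supp(G^*F^*B)\subseteq G^{-1}(\Supp F^*B)$ show that disjointness of supports persists on $Z$. The same componentwise computation then yields
\[
\max(G^*F^*D_1,G^*F^*D_2) \;=\; G^*F^*D_2 + G^*F^*A \;=\; G^*(F^*D_2+F^*A) \;=\; G^*\max(F^*D_1,F^*D_2),
\]
which is the desired equality. In summary, once the separating blow-up is in hand, the rest of the theorem reduces to routine bookkeeping with pullbacks, supports, and the nefness of $F^*D_2$.
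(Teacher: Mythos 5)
Your reduction step has a genuine problem. You propose to find $F:Y\to X$ making $\Supp(F^*A)$ and $\Supp(F^*B)$ \emph{set-theoretically disjoint}, where $A=(D_1-D_2)_+$ and $B=(D_2-D_1)_+$. But this is impossible whenever $\Supp(A)\cap\Supp(B)\neq\emptyset$ on $X$ --- which is exactly the case of interest: any exceptional prime divisor $E$ of $F$ lying over a point of $\Supp(A)\cap\Supp(B)$ has strictly positive coefficient in \emph{both} $F^*A$ and $F^*B$, so $E\in\Supp(F^*A)\cap\Supp(F^*B)$ no matter what model you take. Consequently the identity you invoke, $\max(F^*D_1,F^*D_2)=F^*D_2+F^*A$, also fails: in general $\max(F^*D_1,F^*D_2)=F^*D_2+(F^*A-F^*B)_+$, and $(F^*A-F^*B)_+\neq F^*A$ precisely because $F^*A$ and $F^*B$ acquire common exceptional components.

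The correct (and achievable) separating condition, and the one the paper uses, is weaker: one asks that the supports of $(F^*D_1-F^*D_2)_+$ and $(F^*D_1-F^*D_2)_-$ be disjoint on $Y$. In the paper's terminology these are the ``type $1$'' and ``type $2$'' prime divisors of $Y$; what makes the condition attainable is that exceptional divisors are \emph{allowed} to appear in both $\Supp(F^*A)$ and $\Supp(F^*B)$, provided their coefficients in $F^*D_1$ and $F^*D_2$ are equal (``type $0$''), in which case they drop out of both $(F^*D_1-F^*D_2)_+$ and $(F^*D_1-F^*D_2)_-$. Arranging this is nontrivial: the paper achieves it by a weighted $(a,b)$-blow-up of $I_i^a+I_j^b$ with $a/b$ chosen so that the single new exceptional divisor is exactly balanced (Lemma~\ref{lem3}); an ordinary blow-up would not give type $0$ and would create a new bad pair. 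Once you replace your separating condition by the correct one, the rest of your argument goes through essentially unchanged (with the minor correction that a curve $C$ need not \emph{miss} one of the two supports, only fail to be \emph{contained} in one of them, which is what disjointness actually guarantees and is all that is needed to conclude $A_Y\cdot C\geq 0$ for $A_Y=(F^*D_1-F^*D_2)_+$). So the deduction of nefness and pullback-stability is fine; the gap is in believing the naive disjointness of $\Supp(F^*A)$ and $\Supp(F^*B)$ is the right target and overlooking the delicate weighted blow-up needed to control the types of exceptional divisors.
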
 
\noindent
We write  ${\rm Supp}(D)=\cup_i Q_i$, where the $Q_i$ are prime divisors, and  say that 
\[
Q_i \textrm{ is\ \ \  } \begin{cases} \textrm{ of type $1$, } & \mif {\rm coeff}_{D_1}Q_i\, >\, {\rm coeff}_{D_2} Q_i \\ 
	\textrm{ of type $2$, } & \mif {\rm coeff}_{D_1}Q_i\, <\,  {\rm coeff}_{D_2} Q_i \\
	\textrm{ of type $0$, } & \mif {\rm coeff}_{D_1}Q_i\equ  {\rm coeff}_{D_2} Q_i\ .
  \end{cases}
\]
Our proof of Theorem~\ref{thm:th1}  will be based on the following criterion. 

\begin{proposition}\label{prop:pr1}
The divisor ${\rm max}(D_1, D_2)$ is nef
if $Q_i\cap Q_j=\emptyset$ whenever $Q_i$ is of type 1 and $Q_j$ is of type 2.
Moreover, if this condition holds then for any higher model 
$G: Z\rightarrow X$ then
\[
G^*(\max (D_1, D_2)) \equ  \max(G^*(D_1), G^*(D_2))
\]
\end{proposition}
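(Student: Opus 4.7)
The plan is to split $D_1$ and $D_2$ into a common part and type-specific excesses. Let $R \deq \sum_i \min({\rm coeff}_{D_1}Q_i, {\rm coeff}_{D_2}Q_i) Q_i$, and set $E_1 \deq D_1 - R$ and $E_2 \deq D_2 - R$. Then $E_1$ is supported on the type~$1$ components, $E_2$ on the type~$2$ components, and a direct coefficient check shows
\[
M \deq \max(D_1, D_2) \equ R + E_1 + E_2\ .
\]
By the hypothesis we have $\Supp(E_1)\cap \Supp(E_2)=\emptyset$, and since $X$ is $\Q$-factorial, $R, E_1, E_2$ are all effective $\Q$-Cartier divisors, so all intersection numbers below are well-defined.

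To prove nefness of $M$, I would intersect with an arbitrary irreducible curve $C\subset X$ and argue by cases. If $C$ is not contained in $\Supp(E_2)$, then $E_2\cdot C\geq 0$ because $E_2$ is an effective $\Q$-Cartier divisor whose support does not contain $C$, so $M\cdot C \equ D_1\cdot C + E_2\cdot C \geq 0$ since $D_1$ is nef. If on the other hand $C\subset \Supp(E_2)$, then $C$ lies in some type~$2$ prime $Q_j$; by the disjointness hypothesis $Q_j$ meets no type~$1$ prime, hence $C\cap \Supp(E_1)=\emptyset$ and $E_1\cdot C=0$. Then $M\cdot C \equ D_2\cdot C+E_1\cdot C\geq 0$ because $D_2$ is nef.

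For the compatibility with pullback along $G:Z\to X$, I would compare coefficients at each prime divisor $F$ of $Z$. Writing $G^*M = G^*R+G^*E_1+G^*E_2$ and $G^*D_k = G^*R + G^*E_k$ for $k=1,2$, the desired identity ${\rm coeff}_F(G^*M) = \max({\rm coeff}_F(G^*D_1), {\rm coeff}_F(G^*D_2))$ reduces to showing that at most one of ${\rm coeff}_F(G^*E_1)$, ${\rm coeff}_F(G^*E_2)$ is nonzero, so that the sum on the left equals the max on the right. But ${\rm coeff}_F(G^*E_k)>0$ forces the center $G(F)$ to be contained in some prime of $\Supp(E_k)$, and since $G(F)$ is irreducible while $\Supp(E_1)\cap\Supp(E_2)=\emptyset$, this cannot happen simultaneously for $k=1$ and $k=2$. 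Summing with the common contribution from $G^*R$ yields the claim.

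The main point requiring care is the pullback statement: one must rule out the possibility that an exceptional prime $F$ on $Z$ has center lying inside a type~$1$ component \emph{and} inside a type~$2$ component at once, which is exactly what the disjointness hypothesis forbids. The rest is a routine coefficient and intersection computation made legitimate by $\Q$-factoriality.
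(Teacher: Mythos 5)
Your proof is correct and follows essentially the same route as the paper's. The paper writes $\max(D_1,D_2)=D_1+\sum_{\text{type }2}c_iQ_i$ and argues by a WLOG symmetry on which type contains the curve, respectively which type the center of the exceptional prime lies in; your decomposition $\max(D_1,D_2)=R+E_1+E_2$ with $E_k=D_k-R$ is a cleaner, more symmetric packaging of the same observation, and the two case analyses (for nefness and for the pullback identity) are identical in content.
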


\begin{proof}
We prove first that $\max(D_1, D_2)$ is nef.
Let $C$ be a curve. We note that at least one of the following holds:
\begin{itemize}
\item  There is no $Q_i$ of type 1 containing
$C$,
\item There is no $Q_i$ of type 2 containing $C$. 
\end{itemize}
Without loss of generality 
there is no $Q_i$ of type 2 containing $C$. We can write
 \[{\rm max}(D_1,D_2)= D_1+\sum_{i, Q_i \mbox{\small of type 2}} c_iQ_i,\]
where the $c_i$s are positive constants. But then 
\[C\cdot {\rm max}(D_1, D_2) =C\cdot D_1 + 
C\cdot  (\sum_{i, Q_i \mbox {\small of type 2}}
c_iQ_i)\]
Since $C$ is contained in no $Q_i$ of type 2, the final
 term is positive and $C\cdot D_1\geq 0$ because $D_1$ is nef. Hence  
$C\cdot {\rm max} (D_1, D_2)\geq 0$ for any curve $C$. 

Now, let us prove that given a birational map $G: Z\rightarrow X$ we have that
\[
G^*(\max (D_1, D_2)) \equ \max(G^*(D_1), G^*(D_2)) \ .
\]
We consider a divisor $E\subset Z$. For each $Q_i$, let $d_i\geq 0$ 
be the coefficient of $E$ in $G^*(Q_i)$. We may assume that $\forall i$ 
such that $d_i\neq 0$ $Q_i$ is of type 1 or 0. Writing 
\[ 
D_1= \sum_i a_i Q_i\ ,\ D_2=\sum_i b_i Q_i
\]
the coefficient of $E$ in $G^*(\max(D_1, D_2))$ is $= 
\sum_i d_i\max(a_i,b_i)=\sum_i d_ia_i$ since whenever $d_i\neq 0$ $a_i\geq b_i$.
But the coefficient of $E$ in $\max(G^*(D_1), G^*(D_2))=
\max(\sum_i a_id_i,\sum_i b_id_i)=\sum_i a_id_i$ and hence the condition holds. 
\end{proof}
  
To establish Theorem~\ref{thm:main}, it will therefore be enough to prove the
following result.

\begin{theorem}\label{thm:th2}
Let $X,D_1,D_2$ be as above. There is a projective birational morphism 
\[
F:Y\longrightarrow X
\] 
from a normal $\Q$-factorial variety $Y$ with the following property. 
If  \[F^{-1}(D)=\cup_i Q_i,\] then
for any pair $i,j$ such that  $Q_i$ is of type $1$ and $Q_j$
is of type 2 we have that
$Q_i\cap Q_j=\emptyset.$\\ \\ (Here ``of type 1'', for example, is to be 
understood with respect to the pair of divisors $F^*(D_1)$ and $F^*(D_2)$.)  
\end{theorem}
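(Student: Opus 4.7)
The plan is to build $F: Y \to X$ as an iterated sequence of blow-ups along smooth centers chosen to separate the type-$1$ components from the type-$2$ components; this is the ``separating blow-ups'' construction developed in Section~4. First I would reduce to a combinatorially transparent setting. Using Hironaka resolution, pass from $X$ to a log resolution $X_0 \to X$ on which the supports of the pull-backs of $D_1$, $D_2$, and $D$ together form a simple normal crossings divisor, and write these pull-backs with a common indexing of prime components $Q_i$, say $\sum a_i Q_i$ and $\sum b_i Q_i$. Each $Q_i$ then acquires a well-defined type from the comparison of $a_i$ and $b_i$, and the problem is turned into a combinatorial one about an SNC divisor on a smooth variety (which takes care of $\Q$-factoriality automatically).

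The basic move is: whenever a type-$1$ prime $Q_i$ meets a type-$2$ prime $Q_j$, pick an irreducible component of $Q_i \cap Q_j$ and blow it up. Because the total support is SNC the center is smooth, so the blow-up remains smooth. The strict transforms of $Q_i$ and $Q_j$ become disjoint above this center; a new exceptional divisor $E$ is created, with coefficient $a_i + a_j$ in the pull-back of $D_1$ and $b_i + b_j$ in the pull-back of $D_2$, so its type is controlled by the sign of $(a_i - b_i) + (a_j - b_j)$. One iterates the construction, working on the new model with the pull-backs of $D_1$ and $D_2$.

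The main obstacle is termination: the freshly created $E$ may itself be of type $1$ or type $2$ and may meet further components of the opposite type, so a naive count of ``bad'' incidences does not obviously decrease. To handle this I would introduce a lexicographic invariant stratified by the codimension of the incidence, in the spirit of Hironaka's principalization: the leading entry counts the number of irreducible components of type-$1$/type-$2$ intersections of highest codimension, with secondary entries tracking lower strata. Blowing up a deepest bad intersection should strictly drop the leading entry while only creating incidences in strictly lower codimensions, forcing termination after finitely many steps. Once termination is established, the resulting model $Y \to X$ satisfies the hypothesis of Proposition~\ref{prop:pr1} and therefore yields the conclusion of Theorem~\ref{thm:th1}. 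The multiplicity bookkeeping under blow-up is routine; the real content lies in the termination argument, which is the technical heart of Section~4.
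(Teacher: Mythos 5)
Your plan diverges from the paper at the crucial point, and the divergence is where the gap lives. You propose the ordinary blow-up of (a component of) $Q_i\cap Q_j$, observe that the exceptional divisor $E$ then has coefficients $a_i+a_j$ and $b_i+b_j$ in the two pull-backs, and concede that $E$ may itself be of type $1$ or type $2$. You then try to rescue termination with a lexicographic invariant indexed by codimension, blowing up a deepest mixed stratum first. But this invariant does not drop: if the center is $Q_{k_1}\cap\dots\cap Q_{k_m}$ with mixed types, the blow-up removes that single codimension-$m$ mixed stratum but replaces it with the strata $\overline Q_{k_{i_1}}\cap\dots\cap\overline Q_{k_{i_{m-1}}}\cap E$, and generically several of these are again mixed of codimension $m$. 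For instance with $m=3$ and types $(1,2,2)$ you trade one mixed triple stratum for two or three, depending on the sign of $\sum(a_{k_i}-b_{k_i})$. So the count of highest-codimension mixed strata can go up, and no termination follows from the outline as written. Calling the bookkeeping ``routine'' conceals the real difficulty.

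The paper's resolution of exactly this difficulty is the single new idea you are missing: instead of the ordinary blow-up, one performs a \emph{weighted} $(a,b)$-blow-up along $(Q_i,Q_j)$ (the blow-up of $I_i^a+I_j^b$, followed by normalization), and the weights $a,b$ are chosen so that the unique exceptional divisor has \emph{equal} coefficients in $\pi'^*D_1$ and $\pi'^*D_2$ --- that is, it is of type $0$ (Lemma~\ref{lem3}). Then $E$ simply never enters the set $S(\tilde X,\mu^*D_1,\mu^*D_2)$ of type-$1$/type-$2$ divisors, that set consists exactly of the strict transforms $\overline Q_1,\dots,\overline Q_r$, and since $\overline Q_i\cap\overline Q_j=\emptyset$ the number of bad pairs strictly decreases; termination is then immediate. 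The remaining work in the paper is devoted not to a termination invariant but to verifying that this weighted blow-up is geometrically well-behaved (unique exceptional divisor, $\Q$-factoriality, preservation of Assumptions~\ref{assn:1-4}, in particular irreducibility of pairwise intersections so that ``the'' center is unambiguous). Two smaller points also deserve care: the paper does not pass to an SNC log resolution but instead imposes the weaker Assumptions~\ref{assn:1-4}, and it explicitly assumes $Q_i\cap Q_j$ irreducible --- otherwise blowing up one component of $Q_i\cap Q_j$ does not separate $\overline Q_i$ from $\overline Q_j$ elsewhere, an issue your sketch flags (``above this center'') but does not resolve.
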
 

\noindent
We say that $(Q_i,Q_j)$ is a {\it bad pair}
if $Q_i$ is of type 1, $Q_j$ is of type
2 and $Q_i\cap Q_j\neq \emptyset$. 
We note that if $Q_i$ is of type 1 (resp.2, resp. 0) in $X$ 
then the proper transform
$\overline{Q}_i$ is also of type 1 (resp. 2 resp. 0) in $Y$. 
Our aim will therefore be to create a blow-up
$\hat{X}'$ of
$X$ along $Q_i\cap Q_j$ for any bad pair $(i,j)$
such that 
\begin{itemize}
\item $\overline{Q_i}$ and $\overline{Q_j}$ are separated in $\hat{X}'$ and
\item the unique exceptional divisor $E\subset \hat{X}'$ is of type 0.
\end{itemize}

We relegate the proof of Theorem~\ref{thm:th2} to Section~\ref{sec:sep_blowup}.

\section{Construction of Zariski decomposition for b-divisors}

We proceed with the actual construction of Zariski decompositions, and prove our main result. 

\begin{theorem}\label{thm:main}
Let $X$ be a $\Q$-factorial normal projective variety over an 
algebraically closed field, $\b{D}$ an effective $\Q$-b-divisor 
on $X$. 
Then there exists a unique decomposition
\[
\b{D} = \b{P_D} + \b{N_D}\ ,
\]
where $\b{P_D}$,$\b{N_D}$ are  effective $\mathbb{Q}$-b-divisors on $X$, 
such that
\begin{enumerate}
\item $\HH{0}{X}{\lfloor k\b{P_D}\rfloor } = \HH{0}{X}{\lfloor k\b{D}\rfloor }$
\item $\b{P_D}$ is a limit of b-nef b-divisors on every 
proper birational model $Y\to X$.
\item For any nef $b$-divisor $\b{P'}\leq
\b{D}$ we have that $\b{P'}\leq (\b{P_D})$.
\end{enumerate}
\end{theorem}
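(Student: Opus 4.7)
The plan is to mimic Bauer's surface argument at the b-divisor level. Define
\[
\b{P_D} \ \deq \ \sup \{ \b{P'} : \b{P'} \leq \b{D},\ \b{P'} \text{ is a nef b-divisor} \},
\]
the supremum being taken coefficient by coefficient at each geometric valuation. Since $\b{0}$ is a candidate and every candidate is bounded above by $\b{D}$, the sup is finite, nonnegative, supported on $\Supp(\b{D})$ on each model, and compatible with pushforward (inherited from the candidates), so $\b{P_D}$ is a well-defined effective b-divisor. Set $\b{N_D} := \b{D} - \b{P_D}$. Property (3) holds by construction.

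For property (1), the inclusion $H^0(X,\lfloor k\b{P_D}\rfloor) \subseteq H^0(X,\lfloor k\b{D}\rfloor)$ is immediate from $\b{P_D} \leq \b{D}$. Conversely, given $\sigma \in H^0(X,\lfloor k\b{D}\rfloor)$ we have $\b{\bdiv}(\sigma) + \lfloor k\b{D}\rfloor \geq 0$, so the principal $\Q$-b-divisor $\b{P}_\sigma := -\tfrac{1}{k}\b{\bdiv}(\sigma)$ -- being linearly equivalent to zero and hence nef -- satisfies $\b{P}_\sigma \leq \tfrac{1}{k}\lfloor k\b{D}\rfloor \leq \b{D}$. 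By (3), $\b{P}_\sigma \leq \b{P_D}$, i.e., $-\b{\bdiv}(\sigma) \leq k\b{P_D}$; since the left side has integer coefficients this forces $-\b{\bdiv}(\sigma) \leq \lfloor k\b{P_D}\rfloor$ and therefore $\sigma \in H^0(X,\lfloor k\b{P_D}\rfloor)$.

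Property (2) is the main step, and is where Theorem~\ref{thm:th1} enters. Fix a model $Y$ and let $F_1,\ldots,F_m$ enumerate the primes of $Y$ supporting $\b{D}_Y$. Given $\epsilon > 0$, choose for each $j$ a nef sub-b-divisor $\b{P'_j} \leq \b{D}$ with $c_{F_j}(\b{P'_j}) \geq c_{F_j}(\b{P_D}) - \epsilon$. Pass to a common model $Z_0 \to Y$ on which every $(\b{P'_j})_{Z_0}$ is nef Cartier, and apply Theorem~\ref{thm:th1} inductively (combining the first two, then the result with the third, and so on) to obtain a birational morphism $G : Z \to Z_0$ such that
\[
P_\epsilon \ \deq \ \max_j \, G^*(\b{P'_j})_{Z_0}
\]
is nef on $Z$ and its formation commutes with further pullback. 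Its Cartier closure $\b{Q}_\epsilon := \overline{P_\epsilon}$ is then a b-nef b-divisor dominated by $\b{D}$ (since each $\b{P'_j} \leq \b{D}$), hence $\b{Q}_\epsilon \leq \b{P_D}$ by (3), while the trace of $\b{Q}_\epsilon$ at $F_j$ is at least $c_{F_j}(\b{P'_j}) \geq c_{F_j}(\b{P_D}) - \epsilon$. Sending $\epsilon \to 0$ produces a sequence of b-nef b-divisors whose traces on $Y$ converge to $(\b{P_D})_Y$.

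For uniqueness, any candidate $\b{P}$ satisfying (3) dominates the b-nef sub-b-divisors $\b{Q}_\epsilon \leq \b{D}$ constructed above, so $\b{P_D} \leq \b{P}$ in the limit on each model; applying the same construction to $\b{P}$ and using (3) for $\b{P_D}$ yields the reverse inequality. The principal obstacle is property (2): the pointwise max of two b-divisors is generally not a well-defined b-divisor with nef trace on a fixed model, and it is precisely Theorem~\ref{thm:th1} and the separating blow-ups of Section~\ref{sec:sep_blowup} that allow this naive max to be replaced by a genuine b-nef b-divisor on a suitable modification.
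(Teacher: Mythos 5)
Your proposal follows the paper's strategy closely for the definition of $\b{P_D}$ as a valuation-wise supremum of nef sub-b-divisors, for property (3) (immediate from the definition), and for property (2) (approximate $(\b{P_D})_Y$ by the max of finitely many nef sub-b-divisors, made into a genuine nef b-divisor by Theorem~\ref{thm:th1}, equivalently Lemma~\ref{lem:maxnef}). The one genuine divergence is property (1). The paper proves it via Lemma~\ref{lem:mknef}: the moving-part b-divisor $\b{M}_k(\b{D}) = \b{D} - \tfrac1k\Fix(k\b{D})$ is shown to be a nef b-divisor by passing to a Hironaka resolution on which the mobile part of $\lfloor k\b{D}\rfloor$ becomes base-point free, and then $\b{M}_k(\b{D}) \leq \b{P_D}$ gives the statement. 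You instead observe that for each $\sigma \in H^0(X,\lfloor k\b{D}\rfloor)$, the principal $\Q$-b-divisor $-\tfrac1k\b{\bdiv}(\sigma)$ is the Cartier closure of a numerically trivial, hence nef, Cartier divisor; it is therefore a nef sub-b-divisor of $\b{D}$, so $\leq \b{P_D}$, and integrality of $\b{\bdiv}(\sigma)$ lets you pass to round-downs. This is shorter and bypasses the resolution argument entirely; in effect it proves the needed consequence of Lemma~\ref{lem:mknef} one section at a time, since $\b{M}_k(\b{D})$ is the valuation-wise max of the $-\tfrac1k\b{\bdiv}(\sigma)$.

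Two small caveats, both inherited from the source rather than introduced by you. First, Theorem~\ref{thm:th1} is stated for \emph{effective} nef subdivisors, while the $\b{P}'_j$ occurring in the sup need not be effective; the paper's Lemma~\ref{lem:maxnef} drops effectiveness in exactly the same way. Second, the reverse direction of your uniqueness argument requires the nef b-divisors approximating a candidate $\b{P}$ in property (2) to lie below $\b{D}$ in order to conclude $\b{P}\leq\b{P_D}$, but (2) as stated does not assert this; the paper does not prove uniqueness explicitly, so you are not worse off, but the statement of (2) should really be read as ``limit of nef b-divisors dominated by $\b{D}$'' for that step to close.
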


Granting  Theorem~\ref{thm:th2}, we show how to prove  Theorem~\ref{thm:main} and construct a 
Zariski decomposition in the sense of b-divisors. We start by recalling the 
definition of a nef b-divisor.

\begin{definition}
Let $\b{P}$ be a b-divisor on $X$. We say that $\b{P}$ is nef if 
there is a birational model $X'\rightarrow X$ such that 
\begin{itemize}
\item $P \deq \b{P}_{X'}$ is nef, 
\item $\b{P}=\overline{P}$, the Cartier closure of $P$.
\end{itemize}
\end{definition}

We are now going, given a $\Q$-b-divisor $\b{D}$ on $X$, to define the positive part of $\b{D}$. 

\begin{definition}
We set 
\[ 
\b{P_D} \deq  {\rm max}\{\b{P}| \b{P} \mbox{ a nef $\Q$-b-divisor }, \b{P}\leq \b{D}\} \ .
\] 
\end{definition}

After finishing this paper, we learnt that a very similar construction
has been used by Boucksom, Favre and Jonsson in their paper
\cite{bfj}. More precisely, in the case where $\b{D}$ is a Cartier big
divisor, our definition is the same as Boucksom, Favre and Jonsson's
definition of a positive intersection product, in the special case
where the set of multiplied divisors contains only one element. (See
Definition 2.5 and section 3.4 in \cite{bfj} for more information.)

Then $\b{P_D}$ is a well-defined b-divisor on $X$, and 
$0\leq \b{P_D}\leq \b{D}$. In order to prove Theorem~\ref{thm:main}, we need two preliminary lemmas.

\begin{lemma}\label{lem:maxnef}
Let $\b{P}_1$ and $\b{P}_2$ be nef $\Q$-b-divisors. Then $\max(
\b{P}_1,\b{P}_2)$ is again a nef $\Q$-b-divisor.
\end{lemma}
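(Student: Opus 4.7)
The plan is to reduce to Theorem~\ref{thm:th1} after pulling everything onto a common model. By hypothesis we may write $\b{P}_1 = \overline{P_1}$ and $\b{P}_2 = \overline{P_2}$, where $P_i$ is a nef $\Q$-divisor on a birational model $X_i \to X$. First I would choose a single $\Q$-factorial normal projective birational model $X' \to X$ dominating both $X_1$ and $X_2$ (e.g.\ a resolution of the closure of the graph, followed by a $\Q$-factorialization), and set $D_i \deq f_i^*P_i$ where $f_i : X' \to X_i$. Pullback of nef is nef, and for Cartier divisors the Cartier closure commutes with pullback, so $D_1, D_2$ are nef effective $\Q$-divisors on $X'$ with $\b{P}_i = \overline{D_i}$.

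Next I would apply Theorem~\ref{thm:th1} to $X'$ with $D \deq D_1 + D_2$ and subdivisors $D_1, D_2$. This yields a birational morphism $F : Y \to X'$ such that the $\Q$-divisor
\[
P \deq \max(F^*D_1, F^*D_2)
\]
is nef on $Y$, and moreover for every higher model $G : Z \to Y$ one has the crucial compatibility
\[
G^*P \equ \max(G^*F^*D_1, G^*F^*D_2).
\]

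I would then show that the nef b-divisor $\overline{P}$ (the Cartier closure of $P$ on $Y$) coincides with $\max(\b{P}_1, \b{P}_2)$, where the max of b-divisors is taken coefficient-wise at every geometric valuation. It suffices to verify this on models $Z$ dominating $Y$, since such $Z$ form a cofinal system and a b-divisor is determined by its traces on any cofinal family. On such a $Z \xrightarrow{G} Y \xrightarrow{F} X'$, the trace of $\overline{P}$ is $G^*P$, while $(\b{P}_i)_Z = G^*F^*D_i$ since $\b{P}_i = \overline{D_i}$, so
\[
(\max(\b{P}_1, \b{P}_2))_Z \equ \max(G^*F^*D_1, G^*F^*D_2) \equ G^*P \equ (\overline{P})_Z
\]
by the moreover clause above. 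Hence $\max(\b{P}_1, \b{P}_2) = \overline{P}$ is nef.

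The main obstacle is the second conclusion of Theorem~\ref{thm:th1}: without the compatibility of pullback with taking max, the divisor $P$ on $Y$ would only record the max at the level of $Y$, not at every higher valuation, so $\overline{P}$ could disagree with $\max(\b{P}_1, \b{P}_2)$ on further blow-ups and we would fail to exhibit the max as a Cartier closure of a nef divisor. Everything else (common $\Q$-factorial model, pullback of nef, compatibility of Cartier closure with pullback, pushforward-compatibility of the coefficient-wise max) is routine.
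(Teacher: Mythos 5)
Your proof takes essentially the same route as the paper's: pass to a common $\Q$-factorial model on which both b-divisors become Cartier closures of nef divisors, invoke Theorem~\ref{thm:th1} to get a further model where the coefficientwise max is nef and commutes with all higher pullbacks, and conclude that $\max(\b{P}_1,\b{P}_2)$ is the Cartier closure of that nef divisor. You have simply spelled out the ``after suitable blow-up, we may assume'' step and the cofinality argument that the paper leaves implicit.
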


\begin{proof}
After suitable blow-up, we may assume that 
$\b{P}_i= \overline{P_i}$
is the Cartier closure of a nef divisor $P_i$ on $X$. Theorem~\ref{thm:th1} says that   
we may further assume that 
\[
P \deq \max(P_1, P_2)
\] 
is nef and that on any higher model 
$G:Z\rightarrow X$ \[
\max(G^* P_1, G^* P_2)=G^*(P).
\] 
Alternatively, $\max(\b{P_1}, \b{P}_2)=\overline{P}$ and hence 
$\max(\b{P}_1, \b{P}_2)$ is a  nef divisor.
\end{proof}

Throughout the following, set
\[ 
\b{M}_k(\b{D}) \deq \b{D}-\frac{1}{k}\Fix(k\b{D})
\]
\begin{lemma}\label{lem:mknef}
Let $\b{D}$ be a $\mathbb{Q}$-b-divisor on $X$. Then $\b{M}_k(\b{D})$ is a 
nef $\mathbb{Q}$-b-divisor.
\end{lemma}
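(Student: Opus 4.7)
The plan is to exhibit a birational model $\pi : Y \to X$ on which the trace of $\b{M}_k(\b{D})$ is nef, and then to check that $\b{M}_k(\b{D})$ agrees with the Cartier closure of this nef trace on every higher model. This will verify both items in the definition of a nef b-divisor.

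Concretely, after clearing denominators we may assume $k\b{D}$ is integral. The space $V \deq \HH{0}{X}{k\b{D}}$ is finite-dimensional, and after choosing a basis $\phi_1, \ldots, \phi_r$ we consider the fractional ideal generated on each model by these sections. I would pick $\pi : Y \to X$ to be a principalization of the base ideal of $V$, so that $(\b{\bdiv}\, \phi_i + k\b{D})_Y = F + M_i$ with $F$ a common effective divisor on $Y$ (the divisorial base locus, namely $\Fix(k\b{D})_Y$) and the $M_i$ generating a base-point-free linear series. The mobile divisor $M \deq k\b{D}_Y - F$ is then globally generated and hence nef. Setting aside factors of $k$, this identifies the trace of $k \b{M}_k(\b{D})$ on $Y$ with $M$.

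It remains to see that for every higher model $G : Z \to Y$ we have $\Fix(k\b{D})_Z = G^* F$, equivalently $(k\b{M}_k(\b{D}))_Z = G^* M$. Since $F$ is the divisorial part of a locally principal base ideal on $Y$, pulling back by $G$ produces an effective divisor that is still contained in every $G^*(\b{\bdiv}\, \phi_i + k\b{D})_Y$, and the fact that the $M_i$ already generate $M$ on $Y$ shows $G^* F$ is the exact common effective part on $Z$. Therefore $k\b{M}_k(\b{D}) = \overline{M}$ is the Cartier closure of the nef divisor $M$, and dividing by $k$ yields a nef $\Q$-b-divisor.

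The substantive point in the argument, and the step I would take most care with, is the second one: verifying that once the base ideal of $V$ has been principalized on $Y$, the fixed part of $k\b{D}$ as a b-divisor really is the Cartier closure of $F$ and not merely bounded above by it. This reduces to the observation that base-freeness of a linear series is preserved under pullback, so that on any $Z \to Y$ no further common component appears; all other steps are either book-keeping or direct consequences of the definition of $\b{\bdiv}$ and of $\Fix$ for b-divisors.
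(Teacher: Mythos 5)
Your argument follows essentially the same route as the paper's: resolve the base ideal of $V = \HH{0}{X}{\lfloor k\b{D}\rfloor}$ on a model $Y$ so the mobile part $M$ of $k\b{D}_Y$ is base-point-free and hence nef, and then observe that since a base-point-free linear series stays base-point-free under pullback, $\Fix(k\b{D})_Z = G^*F$ on every higher model $G : Z \to Y$, so $k\b{M}_k(\b{D}) = \overline{M}$ is the Cartier closure of a nef divisor. The only cosmetic difference is that you phrase the resolution as principalization of the base ideal where the paper invokes Hironaka's theorem, and you flag explicitly the point (base-freeness is preserved under pullback) that the paper treats implicitly.
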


\begin{proof}
Let $D$ be the trace of $\b{D}$ on $X$. Set 
$V=H^0(X,\lfloor k\b{D}\rfloor )\subset H^0(X, \lfloor kD\rfloor )$. By Hironaka's resolution of singularities there is a model 
$F: Y\rightarrow X$ such that the mobile part of the linear system $V$ on $Y$ 
is base-point-free, ie. we can write
\[
k\b{D}_Y= M_Y +F_Y 
\]
in such a way that $V\subset H^0(Y,M_Y)$ and $V$ is base point free as a linear system in   $H^0(Y,M_Y)$. We note that $ \b{M}_k(\b{D})_Y= \frac{1}{k}M_Y$ and that $M_Y$ is nef.
Since $V\subset H^0(Y,M_Y)$ is base-point free we have that
\[
{\rm Fix}(k\b{D})_Z= k\b{D}_Z-G^* (M_Y)
\] 
on any higher model $G:Z\rightarrow Y$ and hence 
\[
\b{M}_k(\b{D})=\overline{M_Y}
\] 
It follows that $\b{M}_k(\b{D})$ is a nef b-divisor. 
\end{proof}

\begin{proof}[Proof of Theorem~\ref{thm:main}.]
First we prove that we have $H^0(X, \lfloor k\b{P_D}\rfloor )= H^0(X,\lfloor k\b{D}\rfloor )$ for any $k$.
By Lemma~\ref{lem:mknef}, $\b{M}_k(\b{D})$  is a nef $\mathbb{Q}$-b-divisor. It follows 
by definition that $\b{P_D}\geq \b{M_k}(\b{D})$ and hence that
\[ 
H^0(X, \lfloor k\b{P_D}\rfloor )=H^0(X,\lfloor k\b{D}\rfloor )\ .
\]
Condition 3 is satisfied by definition of $\b{P_D}$. It remains to 
prove condition 2. Here is what we will prove.

\begin{claim}
For any birational model $X'\rightarrow X$, there is a sequence $\b{N}_n$ of nef 
$\mathbb{Q}$-$b$-divisors 
such that $\lim_{n\rightarrow \infty}(\b{N}_n)_{X'}=(\b{P_D})_{X'}$
\end{claim}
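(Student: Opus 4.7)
The plan is to approximate the trace $(\b{P_D})_{X'}$ coefficient by coefficient using finite maxima of nef $\Q$-b-subdivisors of $\b{D}$. Since $\b{P_D}\leq \b{D}$, the trace $(\b{P_D})_{X'}$ is dominated by the Weil divisor $\b{D}_{X'}$ on the projective variety $X'$ and therefore has finite support, say on the prime divisors $E_1,\dots,E_r$. Write $c_i$ for the coefficient of $E_i$ in $(\b{P_D})_{X'}$. By the definition of $\b{P_D}$ as the maximum of nef $\Q$-b-subdivisors of $\b{D}$, each $c_i$ is the supremum of $\mathrm{coeff}_{E_i}\big(\b{P}_{X'}\big)$ over all nef $\Q$-b-divisors $\b{P}\leq \b{D}$.

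First I would fix $n\geq 1$ and, for each $i\in\{1,\dots,r\}$, use the supremum description to choose a nef $\Q$-b-subdivisor $\b{P}^{(i,n)}\leq \b{D}$ whose trace on $X'$ has coefficient at $E_i$ within $1/n$ of $c_i$. Next I would set
\[
\b{N}_n \deq \max\bigl(\b{P}^{(1,n)},\dots,\b{P}^{(r,n)}\bigr),
\]
which is a nef $\Q$-b-divisor by iterated application of Lemma~\ref{lem:maxnef}. Since each $\b{P}^{(i,n)}$ is a nef $\Q$-b-subdivisor of $\b{D}$, it is automatically bounded above by $\b{P_D}$, and hence so is their maximum: $\b{N}_n\leq \b{P_D}$. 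In particular $(\b{N}_n)_{X'}\leq (\b{P_D})_{X'}$, so $(\b{N}_n)_{X'}$ is supported on $\{E_1,\dots,E_r\}$ and its coefficient at any prime outside this set is zero.

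Finally I would check the convergence coefficient by coefficient. Taking the trace of a max of b-divisors yields the max of the coefficients on each prime, so the coefficient of $E_i$ in $(\b{N}_n)_{X'}$ is at least $\mathrm{coeff}_{E_i}\bigl((\b{P}^{(i,n)})_{X'}\bigr)\geq c_i-\tfrac{1}{n}$, while at the same time it is at most $c_i$ because $\b{N}_n\leq \b{P_D}$. Letting $n\to\infty$ gives $(\b{N}_n)_{X'}\to (\b{P_D})_{X'}$, as required.

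The main technical obstacle is the step establishing nefness of $\b{N}_n$: this rests on Lemma~\ref{lem:maxnef}, which itself depends on Theorem~\ref{thm:th1} (the separation-of-types construction on a suitable birational model). Everything else is essentially bookkeeping with the supremum characterization of $\b{P_D}$ and the finite support of its trace on the fixed model $X'$.
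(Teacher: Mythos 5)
Your proposal is essentially the paper's own proof: both approximate each coefficient $c_i$ of $(\b{P_D})_{X'}$ by choosing, for each prime divisor in the finite support, a nef $\Q$-b-subdivisor of $\b{D}$ whose trace is within a prescribed tolerance, then take the finite max (which is nef by Lemma~\ref{lem:maxnef}) and use $\b{N}_n\leq\b{P_D}$ for the complementary upper bound. The only cosmetic difference is your $1/n$ parametrization in place of the paper's $\epsilon$, and you state the sub-b-divisor condition $\b{P}^{(i,n)}\leq\b{D}$ as an inequality of b-divisors outright, which is slightly cleaner than the paper's phrasing in terms of traces on auxiliary models.
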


To this end, set $\b{P_D}_{X'}=\sum_i c_i Q_i$, 
the sum being taken over some finite set of 
irreducible divisors. Let $\epsilon$ be a positive real number. It will be 
enough to show that there is some nef b-divisor $\b{N}_\epsilon$ 
such that 
\[
 || (\b{P_D})_{X'}-(\b{N})_{X'}||\leq \epsilon
\]
in the supremum norm.
Set $c_i-\epsilon= d_i$. By definition of $\b{P_D}$ there 
exists  for every $i$ a nef divisor $\b{N}_i$ on a model $X_i$ such that
$\b{N}_i\leq \b{D}_{X_i}$, 
and ${\rm coeff}_{N_i}(Q_i)\geq d_i$. 

By Lemma~\ref{lem:maxnef}  
\[
\b{N}_\epsilon=\max (\overline{N}_i)
\] 
is nef and $\leq \b{D}$. It then follows that
$\b{N}_\epsilon\leq \b{P_D}$. 
Since we also have ${\rm coeff}_{N}(Q_i)\geq c_i -\epsilon$ for all 
$i$, it follows that 
\[
 || (\b{P_D})_{X'}-(\b{N}_\epsilon)_{X'} ||\leq \epsilon \ ,
\]
and the Theorem follows.
\end{proof}

Since in the case of smooth surfaces there is no need for birational modifications, we get back the Cartier closure of the original Zariski decomposition. Going to higher dimensions, by uniqueness we obtain the following.

\begin{corollary}
 Let $D$ be a $\Q$-Cartier divisor on $X$ having a Zariski decomposition $D=P_D+N_D$ 
in the sense of Fujita, let $\overline{D}=\b{P}_{\overline{D}}+\b{N}_{\overline{D}}$ be the b-divisorial Zariski decomposition 
of the Cartier closure $\overline{D}$. Then 
\[
\b{P}_{\overline{D}} \equ \overline{P_D}\ , \andd \b{N}_{\overline{D}} \equ \overline{N_D}\ . 
\]
 \end{corollary}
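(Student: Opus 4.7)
The plan is to verify directly that $\overline{P_D}$ satisfies the three defining conditions of the b-divisorial positive part appearing in Theorem~\ref{thm:main}; uniqueness then yields $\b{P}_{\overline{D}} = \overline{P_D}$, and $\b{N}_{\overline{D}} = \overline{N_D}$ follows by subtraction. In fact, it suffices to work entirely with the maximality characterization (property (3)), since this determines $\b{P}_{\overline{D}}$ uniquely among effective b-divisors $\leq \overline{D}$ that are themselves a sup of nef b-subdivisors.

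Let $\mu : \widetilde{X} \to X$ be the birational model on which the Fujita decomposition $\mu^*D = P_D + N_D$ lives. First I would check that $\overline{P_D}$ is a nef b-divisor with $\overline{P_D} \leq \overline{D}$. Nefness is immediate from the definition of Cartier closure since $P_D$ is a nef Cartier $\Q$-divisor on $\widetilde{X}$. For the inequality, on any model $Z$ dominating $\widetilde{X}$ via $\pi : Z \to \widetilde{X}$, one has $(\overline{P_D})_Z = \pi^* P_D \leq \pi^* \mu^* D = (\overline{D})_Z$, and by the compatibility of b-divisors with pushforward this inequality descends to any model.

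Next, and this is the heart of the argument, I would show that $\overline{P_D}$ dominates every nef b-subdivisor of $\overline{D}$. Let $\b{P}'$ be a nef b-divisor with $\b{P}' \leq \overline{D}$. By definition there is a model $X' \to X$ on which $\b{P}' = \overline{P'}$ is the Cartier closure of a nef divisor $P'$. Choose any common resolution $X'' \to X$ dominating both $X'$ and $\widetilde{X}$, say via $\nu_1 : X'' \to X'$ and $\nu_2 : X'' \to \widetilde{X}$. Then $\nu_1^* P'$ is nef on $X''$ and
\[
\nu_1^* P' \equ (\b{P}')_{X''} \;\leq\; (\overline{D})_{X''} \equ \nu_2^*\mu^* D.
\]
Fujita's condition (ii), applied to $\nu_2 : X'' \to \widetilde{X}$, now gives $\nu_1^* P' \leq \nu_2^* P_D$, i.e.\ $(\b{P}')_{X''} \leq (\overline{P_D})_{X''}$. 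Pulling back to higher models and pushing down to lower ones, we conclude $\b{P}' \leq \overline{P_D}$ as b-divisors.

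Combining these two points: $\overline{P_D}$ is itself a nef b-divisor with $\overline{P_D} \leq \overline{D}$, so by the very definition of $\b{P}_{\overline{D}}$ as the coefficient-wise maximum over such b-divisors we get $\overline{P_D} \leq \b{P}_{\overline{D}}$; conversely, the maximum is taken over a set each of whose members is $\leq \overline{P_D}$ by the previous paragraph, so $\b{P}_{\overline{D}} \leq \overline{P_D}$. Hence $\b{P}_{\overline{D}} = \overline{P_D}$ and therefore $\b{N}_{\overline{D}} = \overline{D} - \overline{P_D} = \overline{N_D}$. The main obstacle is the bookkeeping between different birational models, specifically arranging a single model $X''$ on which both $\b{P}'$ and $\overline{P_D}$ are pullbacks of Cartier divisors so that Fujita's original maximality statement can be applied verbatim; once this is in place, everything reduces to the definitions.
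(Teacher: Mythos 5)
Your argument is correct and is essentially the intended one: the paper itself gives no proof beyond the phrase ``by uniqueness,'' and your two-sided comparison (first, $\overline{P_D}$ is a nef b-subdivisor of $\overline{D}$, hence $\overline{P_D}\leq \b{P}_{\overline{D}}$; second, Fujita's maximality condition (ii), applied on a common resolution dominating both $\widetilde{X}$ and the model where $\b{P}'$ becomes the Cartier closure of a nef divisor, shows every nef b-subdivisor of $\overline{D}$ is $\leq \overline{P_D}$, hence $\b{P}_{\overline{D}}\leq \overline{P_D}$) is the natural way to make the uniqueness claim precise. The one point worth flagging for completeness is effectivity: Fujita's condition (ii) is stated for nef \emph{effective} divisors $P''$, so when you apply it to $\nu_1^*P'$ you are implicitly using that the competing nef b-subdivisors can be taken effective (which is harmless here, since $\overline{D}\geq 0$ and $0$ is among the competitors, and all the nef b-subdivisors the paper actually produces — the $\b{M}_k(\b{D})$ and the $\overline{N}_i$ — are effective), but it would be cleaner to say so.
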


In the special case where $\b{D}$ is big, we can do better. Recall that a b-divisor is called big if it is the Cartier closure of a big divisor on some model.
\begin{proposition}
If $\b{D}$ is big, then 
$\b{P_D}=\lim_{m\rightarrow \infty} \b{M}_m(\b{D})$.
\end{proposition}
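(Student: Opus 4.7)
The plan is to establish the stated equality as two opposite b-divisor inequalities. The easy direction $\lim_{m\to\infty}\b{M}_m(\b{D}) \leq \b{P_D}$ is essentially already in the proof of Theorem~\ref{thm:main}: by Lemma~\ref{lem:mknef} each $\b{M}_m(\b{D})$ is a nef $\Q$-b-divisor with $\b{M}_m(\b{D}) \leq \b{D}$, so the maximality in condition~(3) gives $\b{M}_m(\b{D}) \leq \b{P_D}$ for every $m$. That the sequence actually converges coefficient-wise on every model follows from Fekete's lemma applied to the subadditive sequence $m \mapsto \Fix(m\b{D})$ (subadditivity coming from multiplication of sections), and the inequality then passes to the limit.

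For the reverse inequality I would use bigness to reduce to a single model on which classical asymptotic tools apply. Since $\b{D}$ is big, fix a model $X_0$ with $\b{D} = \overline{B}$ for $B$ a big $\Q$-Cartier divisor on $X_0$. B-divisor inequalities descend along pushforward, so it is enough to verify $\b{P_D}_Z \leq \lim_m \b{M}_m(\b{D})_Z$ on every proper birational model $Z$ dominating $X_0$. Writing $f\colon Z \to X_0$ and $D := \b{D}_Z = f^*B$, the Cartier presentation forces the identification $H^0(X, m\b{D}) = H^0(X_0, mB) = H^0(Z, mD)$ of section spaces, whence the b-divisorial fixed part $\Fix(m\b{D})_Z$ coincides with the ordinary fixed part of $|mD|$ on $Z$, and so $\b{M}_m(\b{D})_Z = D - \tfrac{1}{m}\Fix(mD)$.

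It remains to bound the positive part by the limit on $Z$; by the definition of $\b{P_D}$ as a supremum over nef sub-b-divisors it suffices to prove $\b{N}_Z \leq \lim_m \b{M}_m(\b{D})_Z$ for every nef b-divisor $\b{N} \leq \b{D}$. After a further blow-up I may assume $\b{N} = \overline{N}$ for a nef divisor $N$ on $Z$ with $N \leq D$. For any prime divisor $E \subset Z$, subadditivity of Nakayama's asymptotic order of vanishing gives
\[
\sigma_E(D) \equ \sigma_E\bigl(N + (D-N)\bigr) \leq \sigma_E(N) + \sigma_E(D-N) \leq 0 + \mathrm{ord}_E(D-N),
\]
using that $\sigma_E$ vanishes on nef divisors and is dominated by $\mathrm{ord}_E$ on effective ones. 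Since $D$ is big, Wilson's theorem (quoted in the introduction) identifies $\sigma_E(D) = \lim_m \mathrm{ord}_E(\Fix(mD))/m$, and rearranging yields $\mathrm{ord}_E(N) \leq \mathrm{ord}_E\bigl(\lim_m \b{M}_m(\b{D})_Z\bigr)$, as required.

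The main technical obstacle is the identification $\Fix(m\b{D})_Z = \Fix(mD)$ of b-divisorial and ordinary fixed parts: it depends crucially on the bigness assumption, which supplies the Cartier presentation $\b{D} = \overline{B}$ and hence the equality of section spaces on models $Z$ dominating $X_0$. Without bigness one loses both the clean formula for $\b{M}_m(\b{D})_Z$ and the applicability of Wilson's asymptotic identity, which is why the sharp limit formula holds only in the big case.
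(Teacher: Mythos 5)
Your proof is correct but takes a genuinely different route from the paper for the reverse inequality $\b{P_D}\leq\lim_m\b{M}_m(\b{D})$. The paper argues as follows: for each nef $\b{N}\leq\b{D}$ it forms $\b{M}'_k=\max(\b{N},\b{M}_k(\b{D}))$, which is big and nef by Lemma~\ref{lem:maxnef} (and hence ultimately rests on the separating-blow-up machinery of Theorem~\ref{thm:th2}); applying Wilson's convergence to the big \emph{nef} divisor $\b{M}'_k$ — where it simply says the asymptotic fixed part vanishes — and observing that $H^0(m\b{M}'_k)\subseteq H^0(m\b{D})$ forces $\b{M}'_k-\tfrac1m\Fix(m\b{M}'_k)\leq\b{M}_m(\b{D})$, sandwiches $\b{N}$ below $\lim_m\b{M}_m(\b{D})$. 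You instead descend to a model $Z$ dominating the model $X_0$ on which $\b{D}$ is the Cartier closure of a big $B$, identify $\Fix(m\b{D})_Z$ with the ordinary fixed part of $|mD|$, and then bound $\mathrm{ord}_E(N)$ for each prime $E\subset Z$ by Nakayama's $\sigma_E$, using its subadditivity, its vanishing on nef classes, its domination by $\mathrm{ord}_E$ on effective classes, and the identity $\sigma_E(D)=\lim_m\tfrac1m\mathrm{ord}_E(\Fix(mD))$ for big $D$. This buys a proof of the Proposition that bypasses Lemma~\ref{lem:maxnef} (and therefore Section 4) entirely, at the cost of importing the full $\sigma$-decomposition toolkit; the paper's proof stays within its own framework and only needs the weaker fact that a big nef divisor has asymptotically vanishing fixed part. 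Your treatment of the easy direction ($\lim_m\b{M}_m(\b{D})\leq\b{P_D}$ via Lemma~\ref{lem:mknef} and maximality) agrees with the paper's, though you make the Fekete-type convergence of $\tfrac1m\Fix(m\b{D})$ explicit where the paper leaves it implicit. One small caveat worth flagging: you apply Nakayama's $\sigma_E$ on $\Q$-factorial normal models; to invoke the standard results literally you should pass to a smooth resolution of $Z$ first, which is harmless but should be said.
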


\begin{proof}
Let $\b{N}$ be nef and $\leq \b{D}$. Choose a $k$ such that 
$\b{M}_k(\b{D})$ 
is big as well as nef. By Lemma \ref{lem:maxnef},
$\b{M'}_k(\b{D})={\rm max} (\b{N}, \b{M}_k(\b{D}))$ is big and nef.
Blowing-up, we may assume that $\b{M'}_k(\b{D})$ is the Cartier closure of its trace on $X$,
$\overline{M'_k(D)}$.  
By Wilson's result \cite{Wilson} 
\[
\lim_{m\rightarrow \infty} (\b{M}'_k(\b{D}) 
-\frac{1}{m}\Fix (m\b{M}'_k(\b{D})))=\b{M}'_k(\b{D}).
\]
But now
\[\b{N}\leq\b{M}'_k(\b{D})= \lim_{m\rightarrow \infty}(\b{M}'_k(\b{D}) 
-\frac{1}{m}\Fix (m\b{M}'_k(\b{D})))\leq 
\lim_{m\rightarrow \infty}\b{M}_m(\b{D})\leq \b{P_D}
\] where the last inequality is valid because $\b{P_D}$ carries all the sections of $\b{D}$. Hence 
\[
\b{N}\leq \lim_{m\rightarrow \infty}\b{M}_m(\b{D})\leq \b{P_D}
\]
for any nef sub-divisor $\b{N}$ of $\b{D}$.
Since $\b{P_D}$ is simply the maximum of all such $\b{N}$'s, it follows that
\[
P_D=\lim_{m\rightarrow \infty}\b{M}_m(\b{D})\ .
\]
In this case, in particular, $\b{P_D}$ is a limit in the strong 
sense of nef $b$-divisors.
\end{proof}

Although the positive part of a b-divisorial Zariski decomposition is not nef, it shares many of the important properties of nef divisors, vanishing being one of the most important.

\begin{corollary}[Vanishing Theorem]
Let $\b{D}$ be a big b-divisor on a smooth variety $X$. then 
\[
 \HH{i}{X}{\OO_X(K_X)\otimes \OO_X(\b{P_D})} \equ 0
\]
for all $i\geq 1$.
\end{corollary}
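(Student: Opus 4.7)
The plan is to exploit the preceding proposition (which applies precisely because $\b{D}$ is big) to realize $\b{P_D}$ as a limit of nef, big $\mathbb{Q}$-b-divisors that are Cartier closures on smooth birational models, and then invoke classical Kawamata--Viehweg vanishing on those models. Concretely, set $\b{M}_m \deq \b{M}_m(\b{D})$. By Lemma~\ref{lem:mknef}, each $\b{M}_m$ is the Cartier closure $\overline{M_m}$ of a nef $\mathbb{Q}$-divisor $M_m$ on some smooth model $f_m\colon Y_m\to X$ (smoothness via Hironaka). Since $\b{D}$ is big and $\b{P_D}\geq \b{M}_m$ carries all of its sections, the divisor $M_m$ is also big on $Y_m$ for all $m\gg 0$.

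Next I would apply Kawamata--Viehweg vanishing on the smooth variety $Y_m$ to the nef and big $\mathbb{Q}$-divisor $M_m$. This yields both
\[
H^i\bigl(Y_m, \OO_{Y_m}(K_{Y_m}+\lceil M_m\rceil)\bigr) \equ 0 \andd R^j f_{m*}\OO_{Y_m}\bigl(K_{Y_m}+\lceil M_m\rceil\bigr) \equ 0
\]
for $i,j\geq 1$. Combining these via the Leray spectral sequence gives $H^i\bigl(X, f_{m*}\OO_{Y_m}(K_{Y_m}+\lceil M_m\rceil)\bigr) = 0$ for $i\geq 1$. Writing $K_{Y_m}=f_m^*K_X+E_m$ with $E_m$ effective and $f_m$-exceptional (possible since both varieties are smooth) and using the projection formula, one identifies this pushforward with $\OO_X(K_X)\otimes\OO_X(\b{M}_m)$ once the b-divisorial sheaf is unwound: the extra exceptional contribution $E_m$ coming from the discrepancy is precisely absorbed into the definition of the b-divisorial sheaf, since global sections of $\OO_X(\b{M}_m)$ are determined by compatibility on all models, including $Y_m$.

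The last step is to pass from the $\b{M}_m$ to the limit $\b{P_D}$. By the previous proposition, $\b{P_D} = \lim_{m\to\infty}\b{M}_m$ in the strong sense of b-divisors, and the inclusions $\OO_X(\b{M}_m)\hookrightarrow \OO_X(\b{P_D})$ form an increasing system whose union is $\OO_X(\b{P_D})$. Since cohomology commutes with filtered colimits of sheaves on a Noetherian scheme, the vanishing $H^i\bigl(X,\OO_X(K_X)\otimes\OO_X(\b{M}_m)\bigr)=0$ for all $m\gg 0$ transfers to $H^i\bigl(X,\OO_X(K_X)\otimes\OO_X(\b{P_D})\bigr)=0$ for $i\geq 1$.

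\textbf{The main obstacle} is precisely this last passage to the limit: although each approximant $\b{M}_m$ is a genuine nef big Cartier closure on a smooth model and hence admits classical vanishing, the b-divisor $\b{P_D}$ itself is only a limit and $\OO_X(\b{P_D})$ is not coherent in general. Making the colimit argument rigorous (and keeping the rounding operations $\lceil\cdot\rceil$, $\lfloor\cdot\rfloor$ compatible across models) is the delicate point. An equivalent route that sidesteps non-coherence is to verify that for $m$ sufficiently large, the rounded b-divisors $\lfloor k\b{M}_m\rfloor$ and $\lfloor k\b{P_D}\rfloor$ agree on every fixed model, so that the cohomology in question already stabilises for $m\gg 0$ and no honest limit need be taken.
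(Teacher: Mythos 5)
The paper's proof is a one-liner: it identifies $\OO_X(-\b{N_D})$ with the asymptotic multiplier ideal $\mathcal{I}(||D||)$ and then cites Nadel/Kawamata--Viehweg vanishing for asymptotic multiplier ideals (\cite[Theorem 11.2.12]{PAG}), so the corollary is a restatement of a known theorem rather than a new argument. You take a genuinely different route, approximating $\b{P_D}$ by the nef big approximants $\b{M}_m$ and invoking Kawamata--Viehweg on the models $Y_m$. The trouble is that the key sheaf identification in your argument is not correct. Since $\b{M}_m$ is the Cartier closure of $M_m$ on $Y_m$, unwinding the paper's definition of the b-divisorial sheaf gives
\[
\OO_X(\b{M}_m) \equ f_{m*}\OO_{Y_m}\bigl(\lfloor M_m\rfloor\bigr)\ ,
\]
with a floor and \emph{no} relative canonical term, whereas your pushforward is
\[
f_{m*}\OO_{Y_m}\bigl(K_{Y_m}+\lceil M_m\rceil\bigr)\equ \OO_X(K_X+D)\otimes f_{m*}\OO_{Y_m}\bigl(K_{Y_m/X}-\lfloor\tfrac{1}{m}\Fix(m\b{D})_{Y_m}\rfloor\bigr)\equ \OO_X(K_X+D)\otimes\mathcal{I}\bigl(\tfrac{1}{m}|mD|\bigr)\ .
\]
In other words, what you have actually computed on the right-hand side is precisely the asymptotic multiplier ideal, not $\OO_X(K_X)\otimes\OO_X(\b{M}_m)$; the exceptional discrepancy $E_m$ is \emph{not} absorbed by the b-divisorial sheaf (which on every model imposes the floor of the trace and nothing more), and the rounding goes the opposite way ($\lceil\cdot\rceil$ versus $\lfloor\cdot\rfloor$). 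So the sentence ``one identifies this pushforward with $\OO_X(K_X)\otimes\OO_X(\b{M}_m)$'' is the gap: these two sheaves differ in general.

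There is also a secondary issue you yourself flag: even granting vanishing for each $\b{M}_m$, the passage to $\b{P_D}$ requires showing $\OO_X(\b{P_D})=\bigcup_m\OO_X(\b{M}_m)$ (so that cohomology of the filtered colimit applies), and this needs the decreasing sequence $\tfrac{1}{m}\Fix(m\b{D})$ to approach $\b{N_D}$ in a sense uniform over all models simultaneously, which your proposal does not establish. If you want to salvage your approach, the cleanest repair is to stop trying to hit $\OO_X(\b{M}_m)$ and instead observe that your computation already yields the multiplier ideal $\mathcal{I}(\tfrac{1}{m}|mD|)$, stabilising to $\mathcal{I}(||D||)$ for $m\gg 0$; then you are simply reproving the asymptotic Nadel vanishing, and what remains is exactly the paper's claimed identity $\OO_X(K_X)\otimes\OO_X(\b{P_D})=\OO_X(K_X+D)\otimes\mathcal{I}(||D||)$ — which is the one content-bearing step and deserves its own justification.
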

\begin{proof}
In this case, we have $\OO_X(-\b{N_D})=\mathcal{I}(||D||)$, the multiplier ideal of $D$, so this is just another restatement of Nadel vanishing.
\end{proof}

\section{The blow-up separating $Q_i$ and $Q_j$.}\label{sec:sep_blowup}

We move on to proving Theorem~\ref{thm:th2}, the technical core of the paper. 
It will be useful to change conventions slightly: from now on, the set of
divisors 
\[
S(X,D_1,D_2)\equ \{Q_1\ldots Q_r\}
\] 
will consist of all divisors in the support of
$D$ which are of type 1 or type 2. In other words, we remove from this set all
the divisors of type 0. 

We make the following assumptions. 
\begin{assn}\label{assn:1-4}\ 
\begin{enumerate}
\item $X$ is a $\Q$-factorial normal variety.
\item For any $m$-tuple $(k_1, k_2,\ldots, k_m)$ the intersection
  $Q_{k_1}\cap\ldots \cap Q_{k_m}$ is of pure codimension $m$.
\item For any pair of distinct $(i,j)$ and for a sufficiently general point 
 $x\in Q_i\cap Q_j$, $x$ is a smooth point of $Q_i$, $Q_j$ and $X$ and 
$Q_i$ and $Q_j$ intersect transversally at $x$. 
\item For any pair of distinct $(i,j)$, $Q_i\cap Q_j$ is irreducible.
\end{enumerate}
\end{assn}

\begin{proposition}\label{prop:pr2}
Under assumptions \ref{assn:1-4}, for any bad pair $(Q_i,Q_j)$,
 there is a proper birational morphism 
$\mu:\tilde{X} \rightarrow X$ with a unique exceptional divisor $E$ such that:
\begin{enumerate}
\item $E$ is of type 0 (relative to $\mu^*(D_1)$ and $\mu^*(D_2)$), 
\item $\overline{Q}_i$ and $\overline{Q}_j$ do not meet in $\tilde{X}$,
\item the conditions of \ref{assn:1-4}  are valid for $\tilde{X}$ and 
$S(\tilde{X}, \mu^*(D_1), \mu^*(D_2))=\{ \overline{Q}_1\ldots \overline{Q}_r
\}$.
\end{enumerate}
\end{proposition}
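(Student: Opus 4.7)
The approach is to construct $\mu$ as a weighted blow-up of $X$ along the codimension-two center $Q_i\cap Q_j$, with weights tuned so that the unique new exceptional divisor $E$ ends up of type $0$. Writing $D_1=a_1 Q_i+b_1 Q_j+\cdots$ and $D_2=a_2 Q_i+b_2 Q_j+\cdots$, the fact that $(Q_i,Q_j)$ is a bad pair means $\alpha := a_1-a_2$ and $\beta := b_2-b_1$ are both positive rationals; I then choose coprime positive integers $p,q$ with $p\alpha=q\beta$.

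To build $\mu$ globally I would proceed étale-locally. By the smoothness/transversality hypothesis of Assumption~\ref{assn:1-4}, around a general point of $Q_i\cap Q_j$ there exist étale coordinates $x_1,\dots,x_n$ on $X$ with $Q_i=\{x_1=0\}$ and $Q_j=\{x_2=0\}$. On each such chart, define $\tilde X$ as the toric refinement of the positive orthant obtained by inserting the ray $(p,q,0,\dots,0)$; this yields a single new exceptional divisor $E$. Since the construction depends only on the pair $(Q_i,Q_j)$ and the weights $(p,q)$, the local patches glue to a global projective birational morphism $\mu\colon\tilde X\to X$, and $\tilde X$ is normal and $\Q$-factorial because weighted blow-ups of this type produce only cyclic quotient singularities.

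For (1), the standard toric formulas give $\mu^*Q_i=\overline{Q_i}+pE$ and $\mu^*Q_j=\overline{Q_j}+qE$, while the pure-codimensionality hypothesis of Assumption~\ref{assn:1-4} (applied with $m=3$) forces every other $Q_k$ to miss the generic point of $Q_i\cap Q_j$, so $E$ does not appear in $\mu^*Q_k$. Hence the coefficient of $E$ in $\mu^*D_1-\mu^*D_2$ equals $p\alpha-q\beta=0$, which is exactly what is needed. For (2), the rays $e_1,e_2$ are separated by the inserted ray and so no longer share a cone of the refined fan, forcing $\overline{Q_i}\cap\overline{Q_j}=\emptyset$ in $\tilde X$. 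For (3), outside $\mu^{-1}(Q_i\cap Q_j)$ the morphism $\mu$ is an isomorphism so the conditions of Assumption~\ref{assn:1-4} not involving $E$ descend directly from $X$; on the exceptional locus, pure codimensionality, generic transversality, and generic smoothness can be read off the explicit toric chart. The main obstacle I expect is the irreducibility clause of Assumption~\ref{assn:1-4} applied to intersections $\overline{Q_k}\cap E$: the divisor $E$ is a weighted $\P^1$-bundle over the irreducible base $Q_i\cap Q_j$, and verifying that $\overline{Q_k}\cap E$ remains irreducible for each $k\neq i,j$ requires a careful local analysis of how $\overline{Q_k}$ cuts the $\P(p,q)$-fibres over $Q_i\cap Q_j\cap Q_k$, possibly necessitating an additional refinement of $\tilde X$ to separate off unintended components.
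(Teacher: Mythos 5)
Your proposal is in essence the same construction as the paper's: what you call the weighted blow-up with weights $(p,q)$ along $Q_i\cap Q_j$ is exactly the normalization of $\textrm{Bl}_{I_i^a+I_j^b}X$ (with $(a,b)=(q,p)$ up to relabelling), and your condition $p\alpha=q\beta$ is the same as the paper's choice of $a,b$ with $c_1b=c_2a$. The local coefficient computation for~(1) and the toric separation argument for~(2) are both sound.

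Two issues, one small and one more substantial. The small one: the \'etale-local gluing is not quite well-defined as stated, because the transversality and smoothness hypotheses in Assumption~\ref{assn:1-4} hold only at a \emph{general} point of $Q_i\cap Q_j$, so your toric charts only cover an open subset. You need a global definition first --- the paper takes $\hat{X}=\textrm{Bl}_{I_i^a+I_j^b}X$ with $I_i=\OO_X(-mQ_i)$, $I_j=\OO_X(-mQ_j)$ and then normalizes --- and then uses the local chart only to identify the unique exceptional divisor and read off coefficients at a general point of $E$. Normality, properness, and uniqueness of $E$ then come from the Rees-algebra description plus the irreducibility of $Q_i\cap Q_j$, not from the toric picture alone; you would still need to argue that the normalization has a unique exceptional divisor (the paper does this by an explicit two-step factorisation $\hat{U}'\to\hat{U}_2\to\hat{U}_1\to\hat{U}$ and the Jacobian criterion on $\hat{U}_2$).

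The more substantial point is that you have mislocated the real difficulty in~(3). The set $S(\tilde X,\mu^*D_1,\mu^*D_2)$ consists only of type~$1$ and type~$2$ divisors, and $E$ is of type~$0$ by construction, so it does \emph{not} belong to $S$; hence the irreducibility and pure-codimensionality clauses of Assumption~\ref{assn:1-4} are never required for intersections involving $E$. Your proposed extra refinement to repair $\overline{Q_k}\cap E$ is therefore unnecessary (and would risk spoiling the ``unique exceptional divisor'' conclusion). What does need to be checked --- and what your proposal does not address --- is that the intersections $\overline{Q}_{k_1}\cap\dots\cap\overline{Q}_{k_m}$ among the proper transforms are still irreducible of pure codimension $m$. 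The paper's argument is a dimension count: one shows $\overline{Q}_{k_1}\cap\dots\cap\overline{Q}_{k_m}\cap E$ has codimension at least $m+1$ (using that $\pi'\colon E\cap\overline{Q}_i\to Q_i\cap Q_j$ is finite in the case $i\in\{k_1,\dots,k_m\}$), hence every irreducible component of $\overline{Q}_{k_1}\cap\dots\cap\overline{Q}_{k_m}$ meets the locus over $X\setminus(Q_i\cap Q_j)$ where $\mu$ is an isomorphism, and so must be the unique proper transform of the corresponding (irreducible) intersection downstairs. Without this step, conditions (2) and (4) of Assumption~\ref{assn:1-4} for $\tilde X$ are not established, and the inductive proof of Theorem~\ref{thm:th2} does not close.
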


Such a blow-up will be called a separating blow-up for $(Q_i, Q_j)$.
We start by showing that Theorem~\ref{thm:th2} follows immediately from Proposition~\ref{prop:pr2}.

\begin{proof}[Proof of Theorem~\ref{thm:th2}.]
After a possible initial blow-up, we may assume that conditions of \ref{assn:1-4}  are satisfied. 
Let $\mu: \tilde{X}\rightarrow X$  be a morphism whose existence is guaranteed by Proposition~\ref{prop:pr2}. Then the set of bad pairs 
for $(\tilde{X}, \mu^*(D_1), \mu^*(D_2))$ is a subset of
\[
\{ (\overline{Q}_{k_1}, \overline{Q}_{k_2})| (Q_{k_1}, Q_{k_2}) \mbox{
is a bad pair for} (X, D_1, D_2), (k_1, k_2)\neq (i,j) \}\ .
\]
In particular, the number of bad pairs strictly decreases under a separating
blow-up. Iterating this procedure, we produce a proper birational
map $F: Y\rightarrow X$ such that $(Y, F^*(D_1), F^*(D_2))$ has no bad 
pairs and $1-4$ holds for $Y$.  But then $F:Y\rightarrow X$ is exactly the map we seek in Theorem~\ref{thm:th2}
\end{proof}

To be able to proceed with the proof of Proposition~\ref{prop:pr2}, we start by defining the type of birational modification we need.  Let $a,b$ be positive coprime integers.
We now define an ``$(a,b)$ blow-up along the pair $(Q_i, Q_j)$''. 

Choose an integer $m$ such that $mQ_i$ and $mQ_j$ are
both Cartier. Denote
\[
 \pi: \hat{X} \deq \textrm{Bl}_{I_i^a+I_j^b}X\longrightarrow X
\]
where $I_i$ and $I_j$ denote the ideal sheaves $\OO_X(-mQ_i)$, and $\OO_X(-mQ_j)$, respectively.
As a consequence of \cite[Proposition 7.16.]{HS} $\hat{X}$ is a variety, in particular it is an integral scheme.

\begin{rmk}
The blow-up constructed above can be given explicitly in local terms as follows.
Choose open affines $U_k\subset X$ 
such that $(mQ_i)\cap U_k$ is defined by a single function $f_k$ and
$(mQ_j)\cap U_k$ is defined by a function $g_k$. 
We define  $\hat{U}_k$ of $U_k$ 
\[
\hat{U}_k \equ \{(x,[U:V])\in U_k\times \mathbb{P}^1| Ug_k^b=Vf_k^a\}\ ,
\]
where  we understand $\hat{U}_k$ to be the subscheme of 
$U_k\times \mathbb{P}^1$ defined by this equation. 
These open sets can  be glued together to give the global blow-up
scheme $\hat{X}$.
\end{rmk}

The variety $\hat{X}$ is not immediately useful, since it is not normal. 
Consider the normalisation $\hat{X}'$ of $\hat{X}$ in the function field
$K(\hat{X})$. We denote the normalisation map by $n: \hat{X}'\rightarrow \hat{X}$, and the composition
$\pi\circ n$ by $\pi'$. Note that $\pi$ is proper (since projective) and 
birational and that $n$ is proper and birational. Therefore, $\pi'$ is proper and birational. 

Throughout what follows the normalisation of an open set $A\subset\hat{X}$ will be denoted
by $A'\subset \hat{X}'$.
 
\begin{proposition}\label{prop:pr3}
The map $\pi':\hat{X}'\rightarrow X$ has a unique exceptional divisor.
\end{proposition}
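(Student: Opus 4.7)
The plan is to analyze the exceptional locus first on $\hat X$, show that over $Q_i\cap Q_j$ it consists of a single irreducible divisor, and then argue that the normalization $n\colon \hat X'\to\hat X$ neither creates new exceptional divisors nor splits the existing one. First, I would observe that the ideal $I_i^a+I_j^b$ has set-theoretic zero locus exactly $Q_i\cap Q_j$, so $\pi$ (and hence $\pi'=\pi\circ n$) is an isomorphism above $X\setminus(Q_i\cap Q_j)$; thus every exceptional prime divisor of $\pi$ lies over $Q_i\cap Q_j$. Working in a local chart $\hat U_k=\{Ug_k^b=Vf_k^a\}\subset U_k\times\mathbb{P}^1$, inspection of the defining equation shows that above any point $p$ with $(f_k(p),g_k(p))\neq(0,0)$ the ratio $[U:V]$ is pinned down, whereas above any $p\in Q_i\cap Q_j$ the equation collapses to $0=0$ and the entire $\mathbb{P}^1$ appears as a fiber. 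Consequently $\pi^{-1}(Q_i\cap Q_j)$ is a $\mathbb{P}^1$-fibration over $Q_i\cap Q_j$, which is irreducible by Assumption~\ref{assn:1-4}(4), and it has pure dimension $\dim X - 1$.

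From this I would deduce that $\pi$ has a unique exceptional prime divisor $E$: every exceptional divisor is contained in the exceptional locus $\pi^{-1}(Q_i\cap Q_j)$, which has a single irreducible component of the maximal dimension. Because $n$ is finite and birational, it carries prime divisors to prime divisors and contracts nothing, so every exceptional prime divisor of $\pi'$ must lie in $n^{-1}(E)$. The remaining question is whether $n^{-1}(E)$ can split into several components, and for this it suffices to verify that $\hat X$ is analytically irreducible (equivalently, unibranch, since $\hat X$ is excellent) at the generic point $\eta_E$ of $E$.

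The analytic-irreducibility check is the main technical obstacle. Invoking Assumption~\ref{assn:1-4}(3), I pick a general point $p\in Q_i\cap Q_j$ at which $X$, $Q_i$ and $Q_j$ are smooth and transversal, and local coordinates $x_1,\dots,x_n$ with $Q_i=\{x_1=0\}$ and $Q_j=\{x_2=0\}$; near $p$ one may take $m=1$. In the chart $V\neq 0$, $\hat X$ is cut out by $tx_2^b=x_1^a$, the point $\eta_E$ corresponds to the prime $(x_1,x_2)$, and the completed local ring at $\eta_E$ is
\[
k(t)[[x_1,x_2]]\big/\bigl(x_1^a - tx_2^b\bigr).
\]
Since $x_1^a-tx_2^b$ is a distinguished polynomial in $x_1$ of degree $a$ over the DVR $k(t)[[x_2]]$, Weierstrass preparation and Gauss's lemma reduce its irreducibility in $k(t)[[x_1,x_2]]$ to irreducibility in $k(t)((x_2))[x_1]$. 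There the Newton polygon with respect to the $x_2$-adic valuation is a single segment from $(a,0)$ to $(0,b)$ of slope $-b/a$, and it has no interior lattice points because $\gcd(a,b)=1$; hence the polynomial is irreducible. Consequently the completed local ring is a domain, $\hat X$ is unibranch at $\eta_E$, and $n^{-1}(E)$ is the unique exceptional prime divisor of $\pi'$.
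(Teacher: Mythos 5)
Your approach is genuinely different from the paper's. Both arguments begin the same way, reducing to the claim that $n^{-1}(E_1)$ contains a single prime divisor, where $E_1$ is the unique exceptional divisor of $\pi\colon\hat X\to X$. The paper then \emph{constructs} the normalization explicitly in two steps, passing from $\hat U$ to $\hat U_1$ (extracting the $m$-th root $s$ of the chart coordinate) and then to $\hat U_2$ (adjoining $t=f^d/g^c$ with $bd-ac=1$), and verifies via the Jacobian criterion that $\hat U_2$ is smooth, hence normal; the explicit coordinates $s,t$ are then reused in Lemma~\ref{lem3} to compute multiplicities and in the $\Q$-factoriality argument. You instead invoke the abstract criterion that $\hat X$ is unibranch at $\eta_E$ iff $\hat{\OO}_{\hat X,\eta_E}$ is a domain (valid for excellent local domains), and prove irreducibility by a Newton-polygon/Dumas argument. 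Your route is shorter and more conceptual, at the price of producing no explicit chart for subsequent use.

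There is, however, a gap in your identification of the completed local ring. You assert that ``near $p$ one may take $m=1$'', but $m$ was fixed \emph{globally} so that $mQ_i$ and $mQ_j$ are Cartier, and the ideal that is blown up is $I_i^a+I_j^b=\OO_X(-maQ_i)+\OO_X(-mbQ_j)$. Near a general transversal point $p$ this ideal is $(x_1^{ma},x_2^{mb})$, so the defining equation of $\hat X$ in the chart $V\neq0$ is $tx_2^{mb}=x_1^{ma}$, not $tx_2^b=x_1^a$; this is exactly what the paper writes as $Ug^{mb}=Vf^{ma}$. The Newton polygon of $x_1^{ma}-tx_2^{mb}$ is the segment from $(ma,0)$ to $(0,mb)$, whose gcd is $m$, so when $m>1$ there are interior lattice points and Dumas's criterion does not apply as stated. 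The step can be repaired in two ways: (i) justify the reduction to $m=1$ by noting that the normalized blow-up of an ideal depends only on its integral closure, and that $(x_1^{ma},x_2^{mb})$ and $(x_1^a,x_2^b)^m$ have the same integral closure (while $\mathrm{Bl}_{J^m}=\mathrm{Bl}_J$), so $\hat X'$ agrees locally with the normalized blow-up of $(x_1^a,x_2^b)$ — this is morally what the paper's passage from $\hat U$ to $\hat U_1$ accomplishes; or (ii) argue directly that $x_1^{ma}-tx_2^{mb}$ is irreducible over $k(t)((x_2))$ by the criterion for pure equations $x^n-c$, using that $t$ is transcendental so $c=tx_2^{mb}$ is not a $p$-th power (and $-4c$ not a fourth power). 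As written, the case $m>1$ is not covered.
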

\noindent
In the course of this proof, we will also find explicit equations 
for a certain open set $\hat{X}'$, which will be useful later on.

\begin{proof}[Proof of Proposition \ref{prop:pr3}.]
The exceptional locus of $\pi:\hat{X}\rightarrow X$ is a $\mathbb{P}^1$ 
bundle over the irreducible set $Q_i\cap Q_j$, so it only contains one 
exceptional divisor, call it $E_1$.

Any exceptional divisor in $\hat{X}'$ maps to $E_1$ under $n$. 
It will therefore be enough to show
that $n^{-1}(E_1)$ contains only one divisor. Moreover, since the
normalisation map is finite-to-one, 
it will be enough to find some open set, $\hat{U}\subset \hat{X}$,  meeting $E_1$, such that
$n^{-1}(E_1\cap U)$ contains a unique divisor in $n^{-1}(\hat{U})$. 

We choose an open affine set $W\subset X$ such that $W$ is smooth and $Q_i\cap W$ and
$Q_j\cap W$ are smooth and meet transversally. Such $W$ exists, since $\hat{X}'$ is normal. 
We assume further that there are regular functions $f$ and $g$ on $W$ such that $Q_i= \Zero (f)$
and $Q_j= \Zero (g)$.

One possible projective embedding of $\hat{W}=\pi^{-1}(W)$ is
\[
\hat{W} \equ\{(x, [U:V])\in W\times \mathbb{P}^1 | U g^{mb}=Vf^{ma}\}\ .
\] 
We consider the open affine set $\hat{U}\subseteq\hat{W}$  given by 
\[ 
\hat{U} \equ \{(x, [U:V])\in \hat{W}| U, V\neq 0\}\ ,
\]
which we can also write as
\[
\hat{U}\equ \{(x,u)\in W\times (\mathbb{A}^1\setminus \{0\})| ug^{mb}=f^{ma}\}\ .
\]
We note first of all that the rational function on $\hat{U}$ given by $s=
\frac{f^a}{g^b}$ satisfies $s^m=u$ and is hence a regular function on
$\hat{U}'$, the normalisation of $\hat{U}$.

Let us now consider the integral affine scheme
\[
\hat{U}_1 \equ \{(x,s)\in W\times (\mathbb{A}^1\setminus \{0\})| sg^b=f^a\}\ .
\]
There is a natural surjective map $\theta: \hat{U}_1\rightarrow \hat{U}$ given
by $\theta(x,s)=(x, s^m)$. This is an isomorphism over the open set
$\hat{U}\setminus E_1\cap \hat{U}$, so there is an inclusion
$A(\hat{U}_1)\subset K(\hat{U})$: since all elements of $A(\hat{U}_1)$ are
integral over $A(\hat{U})$ it follows that there are maps
\[
\hat{U}'\stackrel{\mu}{\rightarrow }\hat{U}_1 \stackrel{\theta}{\rightarrow} \hat{U}\ .
\]
such that $\theta\circ\mu=n$. 

In other words, $\hat{U}'$ is also the normalisation of $\hat{U}_1$, which 
however is still not normal: we need to add some extra regular functions.

Choose numbers $(c,d)$ such that $bd-ac=1$. Consider the element
\[
t \deq  \frac{f^d}{g^c}\in K(\hat{U}_1)\ .
\]
We note that 
\[
t^a \equ \frac{f^{ad}}{g^{ac}}=s^dg\ .
\] 
Similarly $t^b= s^cf$, and in particular $t\in A(\hat{U}')$. We now consider the scheme defined as follows.
\[
\hat{U}_2 \equ \{(x,s,t)\in \hat{U}\times (\mathbb{A}^1\setminus\{0\}) \times \mathbb{A}^1\,|\, t^a= s^dg, t^b= s^cf\}\ .
\]
In $\hat{U}_2$ we have that 
\[
s^{bd} g^b \equ  t^{ab} \equ s^{ac}f^a \equ s^{bd-1}f^a
\] 
and that
\[
s^{cd}g^{c}t \equ t^{ac+1} \equ t^{bd} \equ s^{cd}f^d\ .
\]
so it follows that $sg^b=f^a$ and $t g^c=f^d$ in $A(\hat{U}_2)$. 
In particular,
there is a natural map 
\[
\nu:\hat{U}_2\rightarrow \hat{U}_1
\]
given by $\nu(x,s,t)=(x,s)$. 
We note that
$\nu$ is surjective and set-theoretically one-to-one. 
Indeed, for any $(x,s)\in \hat{U}_1$, 
\[
\nu^{-1}(x,s) \equ \{(x,s,t)|t^a=s^dg, t^b=s^cf\}\ . 
\]
and it easy to see that for fixed $x,s$ such that $sg^b=F^a$ these equations have exactly one solution in $t$.
We note further that as sets 
\[ 
E_2 \deq (\theta\circ \nu)^{-1}(E_1) \equ (Q_i\cap Q_j\cap W)\times (\mathbb{A}^1\setminus \{0\})\times \{0\}\ ,
\]
and hence this set contains only one divisor. We aim now to show that 
$\hat{U}_2$ is in fact the normalisation of $\hat{U}$.
\begin{lemma}\label{lem:lem2}
$\hat{U}_2$ is smooth and everywhere of dimension $n$.
 Moreover, at all points
of $E_2\cap \hat{U}_2$
$t$ is a local equation for the divisor $E_2=(\theta\circ \nu)^{-1}(E_1)$. 
\end{lemma}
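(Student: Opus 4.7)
The plan is to realize $\hat{U}_2$ as a complete intersection in the smooth ambient scheme $W \times (\A^1 \setminus \{0\}) \times \A^1$ of dimension $n+2$, cut out by the two equations $F_1 \deq t^a - s^d g$ and $F_2 \deq t^b - s^c f$, and to verify that the Jacobian of $(F_1, F_2)$ has rank $2$ at every point of $\hat{U}_2$. The dimension is immediate: the map $\nu : \hat{U}_2 \to \hat{U}_1$ has already been shown to be surjective and set-theoretically one-to-one, and $\hat{U}_1$ has dimension $n$, so $\hat{U}_2$ has pure dimension $n$; rank-$2$ Jacobian then gives smoothness. I split the verification into two cases according to whether $t$ vanishes.

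On the open locus $\{t \neq 0\} \cap \hat{U}_2$, I would compute the $2 \times 2$ minor of the Jacobian formed by the $s$- and $t$-columns,
\[
\det \begin{pmatrix} -d s^{d-1} g & a t^{a-1} \\ -c s^{c-1} f & b t^{b-1} \end{pmatrix},
\]
and then substitute the defining relations $s^d g = t^a$ and $s^c f = t^b$, i.e. $s^{d-1}g = t^a/s$ and $s^{c-1}f = t^b/s$. The determinant becomes $(ac - bd)\,t^{a+b-1}/s = -t^{a+b-1}/s$ by the coprimality identity $bd - ac = 1$. This is nonzero on $\{t \neq 0\}$, proving smoothness there.

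On the complementary closed locus $E_2 \cap \hat{U}_2 = \{t=0\} \cap \hat{U}_2$, since $s$ is invertible, $F_1 = 0$ and $F_2 = 0$ force $g(p) = f(p) = 0$, so the underlying point $p$ lies in $Q_i \cap Q_j \cap W$. By assumption~\ref{assn:1-4}, $Q_i$ and $Q_j$ meet $W$ smoothly and transversally, so $df|_p$ and $dg|_p$ are linearly independent in $T^*_p W$. The $T^*_p W$-components of $dF_1$ and $dF_2$ at such a point are $-s^d\, dg|_p$ and $-s^c\, df|_p$, which are linearly independent since $s \neq 0$; hence $dF_1$ and $dF_2$ are independent in the full cotangent space. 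This completes the proof of smoothness.

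For the final assertion, at $q \in E_2 \cap \hat{U}_2$ the relations $F_1 = F_2 = 0$ together with the invertibility of $s$ yield $g = s^{-d}t^a$ and $f = s^{-c}t^b$ inside $\OO_{\hat{U}_2, q}$, so $f, g \in (t)$. The scheme-theoretic intersection $\hat{U}_2 \cap V(t)$ is cut out in the ambient space by $(t, s^d g, s^c f) = (t, f, g)$, which coincides with $(t)$ in $\OO_{\hat{U}_2, q}$. Combined with the irreducibility of $Q_i \cap Q_j$ (and hence of $E_2$) from assumption~\ref{assn:1-4}(4), this shows $t$ is a local equation for the divisor $E_2$. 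The main obstacle is not conceptual but computational: ensuring that in each region one finds a $2 \times 2$ minor whose nonvanishing can be verified \emph{on $\hat{U}_2$}, which is exactly where the coprimality $bd - ac = 1$ and the transversality of $Q_i, Q_j$ enter the argument.
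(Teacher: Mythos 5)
Your smoothness argument is correct and follows essentially the same route as the paper, namely the Jacobian criterion on the explicit equations $F_1 = t^a - s^d g$ and $F_2 = t^b - s^c f$ over a smooth ambient scheme. In fact your presentation is somewhat cleaner: the paper invokes transversality of $Q_i$ and $Q_j$ at the base point without noting that this hypothesis is only meaningful for the points with $t=0$ (where automatically $f=g=0$), whereas you correctly split the two cases and observe that for $t\neq 0$ the $(s,t)$-minor together with the coprimality identity $bd-ac=1$ already yields rank $2$, no transversality needed. The determinant computation $(ac-bd)t^{a+b-1}/s = -t^{a+b-1}/s$ is correct.

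However, the argument for the final assertion, that $t$ is a \emph{local equation} for the divisor $E_2$, has a gap. What must be shown is that $\ddiv(t)=E_2$ near $q$, i.e.\ that $v_{E_2}(t)=1$. You establish that the ideal of $V(t)\cap\hat{U}_2$ in $\OO_{\hat{U}_2,q}$ is $(t)$, which is a tautology, and that the underlying set is irreducible, which gives $\ddiv(t)=mE_2$ for some integer $m\geq 1$; but neither observation rules out $m>1$, i.e.\ that $V(t)\cap\hat{U}_2$ is non-reduced. The missing input is exactly what the paper records as $dt\neq 0$ in $\Omega^1_{\hat{U}_2}$ at $q$: one must check that $dt$ does not lie in the span of $dF_1$ and $dF_2$ in $T^*_q\bigl(W\times(\A^1\setminus\{0\})\times\A^1\bigr)$. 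You already have all the needed ingredients: by transversality $df|_p$ and $dg|_p$ are independent in $T^*_pW$, and $dt$ is a fresh coordinate direction, so $dt,\,df,\,dg$ are independent; hence $(t,f,g)$ cuts out a smooth, in particular reduced, codimension-$3$ subscheme of the ambient, which forces $m=1$. Adding this verification is what actually closes the argument.
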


\begin{proof}
Let $(x,s,t)$ be a point of $\hat{U}_2$ with $x\in W$. 
We consider $W$ as a subset of an affine space $\mathbb{A}^M$. Let
$x_1\ldots x_m$ be the local coordinates on $\mathbb{A}^M$, and let
$h_1,\ldots h_k$ be local equations for $W$ at $x$. The assumption that $x$
should be a smooth point of $W$ at which $Q_i$ and $Q_j$ are smooth and meet
transversally means that the vectors 

\[
\left\{
\left(
\begin{array}{c}
 \frac{\partial h_1}{\partial x_1} \\ \vdots \\ \frac{\partial h_1}{\partial x_m}
\end{array}
\right), 
\dots,
\left(
\begin{array}{c}
 \frac{\partial h_k}{\partial x_1} \\ \vdots \\ \frac{\partial h_k}{\partial x_m}
\end{array}
\right),
\left(
\begin{array}{c}
 \frac{\partial f}{\partial x_1} \\ \vdots \\ \frac{\partial f}{\partial x_m}
\end{array}
\right),\left(
\begin{array}{c}
 \frac{\partial g}{\partial x_1} \\ \vdots \\ \frac{\partial g}{\partial x_m}
\end{array}
\right)
\right\}
\]

are linearly independent. (The implicit evaluations at $x$ have been omitted
for legibility's sake.)

$\hat{U}_2$  is a subset of $\mathbb{A}^M\times (\mathbb{A}^1\setminus \{0\})\times
\mathbb{A}^1$ given by the set of equations 
\[h_1\ldots h_k, t^a-s^dg, t^b-s^cf.\] 
It follows from the Jacobian criterion that $\hat{U}_2$ is
smooth and of dimension $n$ everywhere. 
Moreover, \[t(y)=0\Leftrightarrow f(y)=g(y)=0\]
and hence $E_2$ is set-theoretically given by the equation 
$t=0$. The Jacobian criterion also shows that $dt\neq 0$ in 
$\Omega_{\hat{U}^2}^1$ at any point 
$x\in\hat{U}_2$
 and it follows that $t$ is a local equation for $E_2$. 
\end{proof}

Let us show that  
 $\hat{U}_2$ is integral. It is enough to show that it is not a disjoint 
union of disconnected components. But this follows from the fact that $\nu$ 
is one-to-one and that every component of $\hat{U}_2$ has dimension 
$n={\rm dim}(\hat{U}_1)$. 

We now show that the normalisation map factors through $\nu:\hat{U}_2
\rightarrow\hat{U}_1$.
Over the points where 
$g\neq 0$ we can write \[t=\frac{t^{bd}}{t^{ac}}=
\frac{s^{cd}f^d}{s^{dc}{g^c}} \in A(\hat{U}_1),\] so there
is an open set over which $\nu$ is an isomorphism. Hence there is an inclusion
$A(\hat{U}_2)\subset K(\hat{U}_1)$. 
Moreover, $\hat{U}_2$ is
integral over $\hat{U_1}$. It follows that 
there is a factorisation 
\[\hat{U}'\stackrel{\phi}{\rightarrow} \hat{U}_2
\stackrel{\nu}{\rightarrow} \hat{U}_1\]
such that $\nu\circ\phi=\mu$, and such that $\hat{U}'$ is the normalisation 
of $\hat{U}_2$. But since $\hat{U}_2$ is smooth and hence normal, $\phi$ is an 
isomorphism.  

It follows that 
$\hat{U}'$ has a unique exceptional divisor over $W$, $\phi^{-1}(E_2)$, and 
hence that $\hat{X}'$ indeed contains a unique exceptional divisor, 
$\overline{\phi^{-1}(E_2)}$, which we denote 
by $E$.
\end{proof}

We will now show that the $(a,b)$ blow-up has good properties. 
\begin{lemma}\label{lem3} 
Suppose that $Q_i$ is of type 1 and $Q_j$ is of type 2.
For a suitable choice of (a,b), the coefficient of $E$ in 
$\pi'^* D_1$ is the same as its coefficient in $\pi'^* D_2$. 
\end{lemma}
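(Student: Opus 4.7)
My plan is to compute the multiplicity of $E$ in $\pi'^{*}D_\ell$ for $\ell=1,2$ directly from the local model of $\hat{X}'$ built in the proof of Proposition~\ref{prop:pr3}, and then to choose $(a,b)$ so that the two multiplicities coincide. The first point to settle is that only $Q_i$ and $Q_j$ contribute to $\mathrm{ord}_E$, i.e.\ $\mathrm{ord}_E(\pi'^{*}Q_k)=0$ for $k\neq i,j$. Since $\pi'$ is an isomorphism outside the preimage of $Q_i\cap Q_j$, this reduces to showing that no $Q_k$ with $k\neq i,j$ contains the centre $Q_i\cap Q_j$ of the $(a,b)$ blow-up. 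But by Assumption~\ref{assn:1-4}(2) the triple intersection $Q_i\cap Q_j\cap Q_k$ has pure codimension $3$ in $X$, whereas $Q_i\cap Q_j$ has codimension $2$; hence $Q_k\not\supseteq Q_i\cap Q_j$.

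Next I would read off $\mathrm{ord}_E(\pi'^{*}Q_i)$ and $\mathrm{ord}_E(\pi'^{*}Q_j)$ from the open affine $\hat{U}_2$ constructed in the proof of Proposition~\ref{prop:pr3}. On $\hat{U}_2$ the relations $t^a=s^d g$ and $t^b=s^c f$ hold; Lemma~\ref{lem:lem2} identifies $t$ as a local equation for $E$, and $s$ is a unit on $\hat{U}_2$ by construction (its defining chart lives in $\mathbb{A}^1\setminus\{0\}$). Reading off orders along $E$ therefore gives $\mathrm{ord}_E(\pi'^{*}f)=b$ and $\mathrm{ord}_E(\pi'^{*}g)=a$, and since $f,g$ locally cut out $Q_i,Q_j$ we obtain $\mathrm{ord}_E(\pi'^{*}Q_i)=b$ and $\mathrm{ord}_E(\pi'^{*}Q_j)=a$.

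Denoting the coefficients of $Q_i,Q_j$ in $D_1$ by $\alpha_i,\alpha_j$ and in $D_2$ by $\beta_i,\beta_j$, combining the two steps yields
\[
\mathrm{ord}_E(\pi'^{*}D_1)-\mathrm{ord}_E(\pi'^{*}D_2)\;=\;(\alpha_i-\beta_i)\,b-(\beta_j-\alpha_j)\,a.
\]
The type hypotheses give $\alpha_i-\beta_i>0$ and $\beta_j-\alpha_j>0$, and both are rational because $D_1,D_2$ are $\mathbb{Q}$-divisors. I would therefore choose positive coprime integers $a,b$ with $a/b=(\alpha_i-\beta_i)/(\beta_j-\alpha_j)$, for which the right-hand side vanishes and $E$ is of type $0$, as required. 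The only step that is not mere unwinding of the local description of $\hat{X}'$ is the exclusion of other divisors from contributing to $\mathrm{ord}_E$, and this rests squarely on the purity-of-codimension statement in Assumption~\ref{assn:1-4}(2); once this is in hand, the remainder is a direct local computation followed by a single linear equation for $a/b$.
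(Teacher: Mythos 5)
Your proof is correct and follows essentially the same route as the paper: both hinge on reading off $\mathrm{ord}_E(\pi'^*Q_i)=b$ and $\mathrm{ord}_E(\pi'^*Q_j)=a$ from the local chart $\hat{U}_2$, showing that no other prime divisor contributes to the discrepancy, and then solving the resulting linear condition for $a/b$. The only stylistic difference is bookkeeping: the paper writes $D_\ell = M + c_\ell Q_{\cdot} + F_\ell$ with $M=\min(D_1,D_2)$ so that $M$ cancels automatically and only needs $\Supp(F_\ell)\not\supset Q_i\cap Q_j$, whereas you compute the difference $\mathrm{ord}_E(\pi'^{*}D_1)-\mathrm{ord}_E(\pi'^{*}D_2)$ directly and invoke Assumption~\ref{assn:1-4}(2) explicitly to kill the terms from $Q_k$ with $k\neq i,j$ of type $1$ or $2$ (type-$0$ divisors cancel trivially in the difference). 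Your version actually makes explicit a step the paper leaves implicit --- why $\Supp(F_1),\Supp(F_2)$ cannot contain the blow-up centre --- so the two arguments are mathematically equivalent, with yours being marginally more transparent on that point.
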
 

\begin{proof}
In the above notation, 
$t$ is a local equation for $E$ at a generic point of
$E$. Let $f$ be a local equation for $Q_i$. We have seen above that
at a generic point of $E$, $t^b=s^cf$, so $E$ appears with coefficient $b$ in
$\pi'^*(Q_i)$. Likewise, $E$ appears with coefficient $a$ in 
$\pi'^*(Q_j)$. 

Now, since $Q_i$ is of type 1 and $Q_j$ is of type 2 we can write 
\[
D_1 \equ  M+ c_1 Q_i+ F_1, D_2= M+c_2 Q_j+ F_2
\]
where 
\begin{itemize}
\item $M$ is the minimum of $D_1$ and $D_2$, 
\item the $c_i$'s are positive rationals,
\item $F_1$ and $F_2$ are divisors whose support does not contain 
$Q_i\cap Q_j$.
\end{itemize}
In particular, $F_1$ and $F_2$
do not contribute to the coefficient of $E$ in
$\pi'^*(D_i)$.
It is 
therefore enough to require $c_1b=c_2a$. In other words, by picking
$(a,b)$ to be  the unique pair of coprime positive 
integers such that $a/b=c_1/c_2$, we can arrange the required coefficients to be equal.
\end{proof}

We now need the following proposition.
\begin{proposition}\label{pr4}
The divisors $\overline{Q}_i$ and $\overline{Q}_j$ do not meet in $\hat{X}'$.
\end{proposition}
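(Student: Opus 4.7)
\medskip\noindent\textbf{Proof plan.}
The plan is to show that the two strict transforms are already disjoint on the (non-normal) variety $\hat{X}$, and then to transport this disjointness to $\hat{X}'$ via the finite birational morphism $n:\hat{X}' \to \hat{X}$. The argument is local, so one works on an affine open $W \subset X$ with $Q_i \cap W = \Zero(f)$ and $Q_j \cap W = \Zero(g)$, and uses the embedding
\[
\hat{W} \equ \st{(x,[U\!:\!V]) \in W \times \mathbb{P}^1 \, \vert \, U g^{mb} = V f^{ma}}.
\]

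The first step is a one-line local computation on each of the two standard affine charts of $W \times \mathbb{P}^1$. If $x\in Q_i\setminus Q_j$, then $f(x)=0$ and $g(x)\neq 0$, so the defining equation forces $U=0$; hence $\pi^{-1}(Q_i\setminus Q_j) \subset W \times \{[0\!:\!1]\}$. By the symmetric argument, $\pi^{-1}(Q_j\setminus Q_i) \subset W \times \{[1\!:\!0]\}$. Since $W \times \{[0\!:\!1]\}$ and $W \times \{[1\!:\!0]\}$ are both closed in $W \times \mathbb{P}^1$ and disjoint, the Zariski closures satisfy
\[
\overline{Q}_i^{\,\hat{W}} \subset W \times \st{[0\!:\!1]}, \qquad \overline{Q}_j^{\,\hat{W}} \subset W \times \st{[1\!:\!0]},
\]
and so the two proper transforms on $\hat{W}$ do not meet. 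Covering $X$ by such affines $W$ produces the global statement $\overline{Q}_i \cap \overline{Q}_j = \emptyset$ in $\hat{X}$.

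For the second step, note that because $\pi'=\pi\circ n$ is an isomorphism over the complement of $Q_i\cap Q_j$, the strict transforms $\overline{Q}_i^{\,\hat{X}'}$ and $\overline{Q}_j^{\,\hat{X}'}$ are by definition the closures in $\hat{X}'$ of the isomorphic lifts of $Q_i\setminus Q_j$ and $Q_j\setminus Q_i$ respectively. Continuity of $n$ therefore forces $n(\overline{Q}_i^{\,\hat{X}'}) \subset \overline{Q}_i^{\,\hat{X}}$ and $n(\overline{Q}_j^{\,\hat{X}'}) \subset \overline{Q}_j^{\,\hat{X}}$. A common point of the two strict transforms in $\hat{X}'$ would thus map to a common point of the two strict transforms in $\hat{X}$, which is impossible by the previous paragraph.

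There is no real obstacle here: the argument is a direct local computation on the two standard charts of the $\mathbb{P}^1$-bundle of the Rees construction. The only mildly delicate point is being sure that we are comparing the right notions of proper transform on $\hat{X}$ and $\hat{X}'$, which is handled by the observation that $n$ is an isomorphism outside the exceptional locus, so strict transforms in $\hat{X}'$ are genuinely sent into strict transforms in $\hat{X}$.
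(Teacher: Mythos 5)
Your proof is correct and takes essentially the same approach as the paper: both rest on the local observation that in the chart $\hat{U}_k\subset U_k\times\mathbb{P}^1$ the proper transform of $Q_i$ lies in $\{U=0\}$ and that of $Q_j$ in $\{V=0\}$, which are disjoint. You merely spell out the closure argument and the transport along the normalization map $n$, which the paper leaves implicit.
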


\begin{proof}
It will be enough to show that
$\overline{Q}_i$ and $\overline{Q}_j$ do not meet in $\hat{X}$. But for any $k$
$\overline{Q}_i\cap \hat{U}_k$ is contained in the set given by 
$U=0$ and $\overline{Q}_j\cap\hat{U}_k$ is contained in the set given by 
$V=0$, which are disjoint. 
\end{proof}

Henceforth, we will call any
$(a,b)$ blow-up along $(Q_i, Q_j)$ such that the
coefficient of $E$ is the same in $\pi^*(D_1)$ as in $\pi^*(D_2)$ 
a separating
blow-up for $(i,j)$. In particular, if $(\hat{X}', \pi')$ is a separating 
blow-up for $(Q_i, Q_j)$ then $\pi'$ has a unique 
exceptional divisor of type 0 and that $\overline{Q}_i$ and $\overline{Q}_j$ 
do not meet in $\hat{X}'$. 

\begin{proof}[Proof of Proposition~\ref{prop:pr2}.]
Choose $(a,b)$ such that the conditions of Lemma~\ref{lem3} are fulfilled,  
let $\tilde{X}=\hat{X}'$ and $\mu=\pi'$ for this pair $(a,b)$. According to Proposition~\ref{pr4}, the 
morphism $\mu$ is a separating blow-up for the pair $(Q_i,Q_j)$ provided the assumptions \ref{assn:1-4} are satisfied. 

We start with proving that $\hat{X}'$ is $\Q$-factorial. We have that 
\[
\Q{\rm Weil}(\hat{X}')=\pi^*(\Q{\Weil}(X))  \oplus \spn{E} \ .
\] 
We are done if we can show that $E$ is a $\Q$-Cartier divisor. 
It will be enough to produce a Cartier divisor $L$ on $\hat{X}$ 
such that (set-theoretically) $\Supp (L)= E_1$. Indeed, the pull-back
$n^*(L)$ is then  a Cartier divisor on $\hat{X}'$ whose support is contained in
$n^{-1}(E_1)$. But this set  contains only one prime divisor,
$E$, so the Weil divisor associated to $\pi'^*(E_1)$  
is necessarily a multiple of $E$. 

 We now construct $L$ as follows. Consider the covering of $\hat{X}$ by 
the sets $\hat{U}_k^1$ and $\hat{U}_k^2$ given by
\[
\hat{U}_k^1 \equ \{ (x, [U:1])\in \hat{U}_k\}\] \[\hat{U}_k^2=\{ ( x, [1:V])\in
\hat{U}_k\}\ .
\] 
We choose the Cartier divisor given by
 $g_k^b$ on $\hat{U}_k^1$ and $f_k^a$ on 
$\hat{U}_k^2$. It is immediate that the support of this Cartier divisor is
$E_1$, hence $\tilde{X}=\hat{X}'$ is $\Q$-factorial.

Condition (1) of \ref{assn:1-4}  is therefore inherited. Conditions (2)-(4) will quickly follow
from the following lemma.
\begin{lemma}\label{lem4}
Consider divisors $\overline{Q}_{k_1}\ldots, \overline{Q}_{k_m}$ in
$\hat{X}'$. The intersection \[\overline{Q}_{k_1}\cap\ldots\cap
\overline{Q}_{k_m}\cap E\] is of codimension $\geq m+1$.
\end{lemma}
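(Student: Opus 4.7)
The plan is to push $Y \deq \overline{Q}_{k_1}\cap\ldots\cap\overline{Q}_{k_m}\cap E$ down to $X$ via $\pi'$ and extract the dimension bound from Assumption~\ref{assn:1-4}(2). Since $\pi'(E)=Q_i\cap Q_j$ and $\pi'(\overline{Q}_{k_l})\subseteq Q_{k_l}$, we have $\pi'(Y)\subseteq T \deq Q_{k_1}\cap\ldots\cap Q_{k_m}\cap Q_i\cap Q_j$, which by Assumption~\ref{assn:1-4}(2) has pure dimension $n-m'$, where $m'$ is the number of distinct elements of the list $\{k_1,\ldots,k_m,i,j\}$. The job is then to combine this with a fibre-dimension estimate for $\pi'$ restricted to $E$ and to $\overline{Q}_i\cap E$.

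I would distinguish three cases. If $\{i,j\}\subseteq\{k_1,\ldots,k_m\}$ then $Y\subseteq\overline{Q}_i\cap\overline{Q}_j=\emptyset$ by Proposition~\ref{pr4}, and the bound is vacuous. If $\{i,j\}$ is disjoint from $\{k_1,\ldots,k_m\}$, then $\dim T=n-m-2$, and I claim every fibre of $\pi'|_E$ has dimension at most $1$; together with $\pi'(Y)\subseteq T$ this gives $\dim Y\leq\dim T+1=n-m-1$. Finally, if exactly one of $i,j$, say $k_1=i$, lies in $\{k_l\}$, then $\dim T=n-m-1$ and $Y\subseteq\overline{Q}_i\cap E$; here I claim $\pi'|_{\overline{Q}_i\cap E}$ has finite fibres (generically a section), yielding $\dim Y\leq\dim T=n-m-1$. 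In all three cases $Y$ has codimension at least $m+1$ in $\hat{X}'$.

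To establish the two fibre-dimension claims one works locally. On the chart $\hat{U}_2$ constructed in the proof of Proposition~\ref{prop:pr3} the explicit description $E\cap\hat{U}_2=(Q_i\cap Q_j\cap W)\times(\A^1\setminus\{0\})\times\{0\}$ was already obtained; in particular $\pi'|_E$ has $1$-dimensional fibres there, and moreover $\overline{Q}_i\cap\hat{U}_2=\emptyset$ (because on $\hat{U}_2$ one has $s\neq 0$, and $t^b=s^c f$ would then force $f=0$ together with $t=0$, which is impossible on the strict transform of $Q_i$), so $\overline{Q}_i\cap E$ is disjoint from $\hat{U}_2$. To complete the picture one covers the remaining part of $E$, namely $(\overline{Q}_i\cup\overline{Q}_j)\cap E$, by analogous charts coming from the standard affines $\hat{U}_{k,V}$ and $\hat{U}_{k,U}$ of the unnormalised blow-up $\hat{X}$. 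On $\hat{U}_{k,V}$ the set $\overline{Q}_i\cap E_1$ appears as the section $(Q_i\cap Q_j\cap W)\times\{[0:1]\}$ of the $\P^1$-bundle $E_1\to Q_i\cap Q_j\cap W$, and since the normalisation map $n:\hat{X}'\to\hat{X}$ is finite, both claims follow.

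The main obstacle is this last chart-by-chart verification: one must check that, away from $\hat{U}_2$, the normalisation of the weighted blow-up still carries the expected $\P^1$-fibre structure over every point of $Q_i\cap Q_j$, and that $\overline{Q}_i\cap E$ maps with finite fibres to $Q_i\cap Q_j$ at \emph{every} point and not just generically, so that upper-semicontinuity arguments cannot produce a hidden positive-dimensional fibre through which $\dim Y$ could exceed the stated bound.
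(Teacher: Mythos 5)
Your proof is correct and follows essentially the same strategy as the paper's: split into cases according to how many of $i,j$ occur among $k_1,\ldots,k_m$, push the intersection down via $\pi'$, invoke Assumption~\ref{assn:1-4}(2) to bound the codimension of the image in $X$, and then lift the bound back using that the fibres of $\pi'|_{E}$ over $Q_i\cap Q_j$ are at most one-dimensional and the fibres of $\pi'|_{\overline{Q}_i\cap E}$ are finite. The paper's proof asserts these fibre-dimension statements without comment (``the map $\pi'\colon E\cap\overline{Q}_i\to Q_i\cap Q_j$ is finite-to-one''), whereas you take the extra step of justifying them via the explicit chart description of $\hat{X}$ and finiteness of the normalisation $n$. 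That extra care is well placed: since Assumption~\ref{assn:1-4}(3) only controls the behaviour of $Q_i\cap Q_j$ at general points, it is worth noting that the description $E_1\cap\hat{U}_k = (Q_i\cap Q_j\cap U_k)\times\P^1$ holds set-theoretically at every point (because the equation $Ug_k^{mb}=Vf_k^{ma}$ is vacuous where $f_k=g_k=0$), and that $\overline{Q}_i\subset\{U=0\}$ forces $\overline{Q}_i\cap E_1$ to sit inside the section $\{U=0\}\cap E_1$. In short: same decomposition, same key inclusions, with you supplying the fibre-dimension bookkeeping that the paper leaves implicit.
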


\begin{proof}
Since $\overline{Q}_i\cap \overline{Q}_j=\emptyset$ we can assume that either
\begin{enumerate}
\item $i,j \neq \{ k_1,\ldots, k_m\}$,
\item $i\in \{ k_1,\ldots, k_m\}$, $j\not\in \{ k_1\ldots, k_m\}$
\end{enumerate}
We consider first the

\noindent
{\bf  Case 1.}
We have that 
\[\overline{Q}_{k_1}\cap\ldots \cap \overline{Q}_{k_m}\cap E\subset
\pi'^{-1}(Q_{k_1}\cap...\cap Q_{k_m}\cap Q_i\cap Q_j).\] But
$Q_{k_1}\cap\ldots \cap Q_j$  
has codimension $(m+2)$, so 
$\pi'^{-1}(Q_{k_1}\cap...\cap Q_{k_m}\cap Q_i\cap Q_j)$ has codimension $\geq m+1$. 

\noindent
{\bf Case 2.}
We assume without loss of generality that $i=k_1$. We then have that 
\[(\overline{Q}_{k_1}\cap\ldots \cap \overline{Q}_{k_m}\cap E )\subset
(\overline{Q_i}\cap E)\cap 
\pi'^{-1}(Q_{k_2}\cap...\cap Q_{k_m})\] and
\[ (\overline{Q_i}\cap E)\cap 
\pi'^{-1}(Q_{k_2}\cap...\cap Q_{k_m})\subset
(E\cap \overline{Q}_i)\cap
\pi'^{-1}(Q_{k_2}\cap...\cap Q_{k_m}\cap Q_i\cap Q_j).\]
But the map 
\[ 
\pi': E\cap \overline{Q}_i\rightarrow Q_i\cap Q_j
\] 
is finite-to-one,
so the 
codimension of 
\[
(E\cap \overline{Q}_i)\cap \pi'^{-1}(Q_{k_2}\cap...\cap Q_{k_m}\cap Q_i\cap Q_j)
\] 
is at least $m+1$ and 
\[
{\rm codim}(Q_{k_2}\cap...\cap Q_{k_m}\cap Q_i\cap Q_j)\geq m+1\ .
\] 
\end{proof}

But now, every irreducible component of
$\overline{Q}_{k_1}\cap \ldots \cap
\overline{Q}_{k_m}$ is of codimension at most $m$, since it is
an intersection of $m$ divisors in a $\Q$-factorial 
normal variety. It follows that 
\[(\overline{Q}_{k_1}\cap \ldots\cap
\overline{Q}_{k_m})\cap(\pi^{-1}(X\setminus (Q_i\cap Q_j)))\cong 
Q_{k_1}\cap\ldots\cap Q_{k_m}\cap(X\setminus Q_i\cap Q_j)\] 
is a dense open subset of 
$\overline{Q}_{k_1}\cap 
\ldots \cap \overline{Q}_{k_m}$. Hence (2), (3) and (4)
hold for $\hat{X}'$. 
This completes the proof of Proposition \ref{prop:pr2}.
\end{proof}


\begin{thebibliography}{99}

\bibitem{Bauer} Bauer, T. {\it A simple  proof for the existence of  Zariski decomposition on surfaces}. Journal of Algebraic Geometry, S 1056-3911(08)00509-2, to appear.

\bibitem{Ben1} Benveniste, X. {\it Sur la d\'ecomposition de Zariski en 
dimension $3$}.
 C. R. Acad. Sci. Paris S\'er. I Math.  {\bf 295}  (1982), no. 2, 107--110.

\bibitem{Ben2}
Benveniste, X. {\it Sur l'anneau canonique de certaines vari\'et\'es de 
dimension $3$.}  
 Invent. Math.  {\bf 73}  (1983),  no. 1, 157--164.

\bibitem{bfj} Boucksom, S., Favre, C., Jonsson, M. {\it
    Differentiability of divisors and a problem of Teissier.} math.AG/0608260.

\bibitem{Corti} {\it Flips for 3-folds and 4-folds.} Edited by Alessio Corti. 
Oxford Lecture Series in Mathematics and its Applications, {\bf 35}. 
Oxford University Press, Oxford, 2007. 


\bibitem{CKL} Corti, A., Kaloghiros A-S., \'Lazic, V. 
{\it Introduction to the minimal model program and the existence of flips}.
Preprint

\bibitem{Cutkosky} Cutkosky, S. {\it Zariski  decomposition of divisors 
on algebraic  varieties}. Duke Mathematical Journal {\bf 53}
 (1986) no. 1., 149--156.

\bibitem{Demailly} Demailly, J-P.; Peternell, T.; 
Schneider, M. {\it Pseudo-effective line bundles on compact K\"ahler
 manifolds}.  Internat. J. Math.  {\bf 12}  (2001),  no. 6, 689--741.

\bibitem{fujita}
Fujita, T. {\it Canonical rings of algebraic varieties}.  
Classification of algebraic and analytic manifolds (Katata, 1982),  65--70, 
Progr. Math., {\bf 39}, Birkh\"auser Boston, Boston, MA, 1983. 

\bibitem{HS}
Hartshorne, R. {\it Algebraic Geometry}. 
 Graduate Texts in Mathematics, No. 52. Springer Verlag, New York--Heidelberg, 1977.

\bibitem{Kawamata} Kawamata, Y.
{\it The Zariski decomposition of log-canonical divisors}. Algebraic geometry, 
Bowdoin, 1985 (Brunswick, Maine, 1985), 425--433,
Proc. Sympos. Pure Math., {\bf 46}, 
Part 1, Amer. Math. Soc., Providence, RI, 1987

\bibitem{KM} Koll\'ar, J.; Mori, S. {\it Birational geometry of 
algebraic varieties}. With the collaboration of C. H. Clemens and A. Corti. 
 Cambridge Tracts in Mathematics, 
{\bf 134}. Cambridge University Press, Cambridge, 1998.

\bibitem{K14} {\it Flips and abundance for algebraic threefolds}.
Papers from the Second Summer Seminar on Algebraic Geometry held at the 
University of Utah, Salt Lake City, Utah, August 1991. Ast\'erisque No. 
{\bf 211} (1992). Soci\'et\'e Math\'ematique de France, Paris, 1992. 
pp. 1--258.

\bibitem{moriwaki} Moriwaki, A. 
{\it Semiampleness of the numerically effective part of Zariski 
decomposition.}  
J. Math. Kyoto Univ.  {\bf 26}  (1986),  no. 3, 465--481.

\bibitem{PAG} Lazarsfeld, R. {\it Positivity in Algebraic Geometry
  I.-II.} Ergebnisse der Mathematik und ihrer Grenzgebiete, Vols.
  {\bf 48-49}., Springer Verlag, Berlin, 2004.

\bibitem{Nakayama} Nakayama, N. {\it Zariski-decomposition and abundance}. 
MSJ Memoirs, {\bf 14}. Mathematical Society of Japan, Tokyo, 2004.

\bibitem{prokhorov} Prokhorov, Yu. G. {\it On the Zariski
    decomposition problem.}  Tr. Mat. Inst. Steklova  {\bf 240}
  (2003),  43--72;  translation in  Proc. Steklov Inst. Math.  2003,
  no. 1 {\bf 240}, 37--65 

\bibitem{shokurov} Shokurov, V. V. 
{\it Prelimiting flips.}  Tr. Mat. Inst. Steklova  {\bf 240}  (2003),
82--219;  translation in  Proc. Steklov Inst. Math.  2003,  no. 1 {\bf
  240}, 75--213 

\bibitem{Tsuji1} Tsuji, H. {\it Analytic Zariski decomposition}. 
International Symposium "Holomorphic Mappings, Diophantine Geometry 
and Related Topics" (Kyoto, 1992).

\bibitem{Tsuji2} Tsuji, H. {\it Canonical singular hermitian metrics 
on relative canonical bundles}. arXiv:0704.0566.

\bibitem{Wilson} P.~M.~H.~Wilson: {\it On the canonical ring of algebraic varieties}, Compositio Math.
43 (1981), no. 3, 365--385.

\bibitem{Zariski} Zariski, O. 
{\it The theorem of Riemann--Roch for high multiples of an effective 
divisor on an 
algebraic surface}. Annals of Mathematics (2) {\bf 56} (1962), 560--615.

\end{thebibliography}
\end{document}